\def\wideubar{\underaccent{{\cc@style\underline{\mskip8mu}}}}
\newtheorem{lemma}{Lemma}[section]
\newtheorem{theorem}{Theorem}[section]
\newtheorem{assumption}{Assumption}[section]
\title{Globalized distributionally robust optimization with multi core sets}
\author{Yueyao Li, Chenglong Bao, Wenxun Xing}
\date{}
\begin{document}

\maketitle

\begin{abstract}
It is essential to capture the true probability distribution of uncertain data in the distributionally robust optimization (DRO).  
The uncertain data presents multimodality in numerous application scenarios, in the sense that the probability density function of the uncertain data has two or more modes (local maximums). 
In this paper, we propose a globalized distributionally robust optimization framework with multiple core sets (MGDRO) to handle the multimodal data. This framework captures the multimodal structure via a penalty function composed of the minimum distances from the random vector to all core sets. 
Under some assumptions, the MGDRO model can be reformulated as tractable semi-definite programs for both moment-based and metric-based ambiguity sets. 
We applied the MGDRO models to a multi-product newswendor problem with multimodal demands. The numerical results turn out that the MGDRO models outperform traditional DRO models and other multimodal models greatly.
\end{abstract}

\section{Introduction}\label{sec:intro}
The distributionally robust optimization (DRO) method allows decision makers to handle the uncertain parameters in the optimization problems, via constructing an ambiguity set to characterize the probability distribution of the uncertain parameters. In this paper, we investigate a complicated case when the uncertain parameters are multimodal, and assume that there is only a set of samples of the uncertain parameters available. It is an underlying challenge to capture the multimodality of the uncertain data in DRO when modelling with only a set of samples.

Significantly, multimodality of uncertain data is widely observed in the real life, such as demands in newsvendor problems \cite{dro:multimodal}, renewable data streams and forecast errors in energy management \cite{dro:ehs,dro:hmmg,wind}, and many engineering variables \cite{reliability}. In this paper, the multimodality of the probability distribution means that the probability density function of each random variable has two or more modes (local maximums), and the multimodality of uncertain data means that the data follows a multimodal probability distribution. The multimodal distribution may also be referred to as the mixture distribution in other papers \cite{drcc:mix,portfolio:mix,rsome}, since a multimodal distribution may be obtained from a mixture of some unimodal distributions. Thus the mixture components of the multimodal distribution can be regarded as the distribution of the random variables under different states.

The DRO method usually characterizes the probability distribution by the ambiguity set. 
The moment-based ambiguity set is widely used in DRO models due to tractable computability, which mainly describes the moments information (such as the mean and variance) of the probability distribution, see for example \cite{dro:ye,dro:matrix,dro:linear,dro:convex,dro:approx}. However, the moments information, especially the frequently-used lower order moments information, is hardly able to capture the multimodality of the data. Therefore, it may cause over-conservatism when the uncertain data is multimodal.

The metric-based ambiguity set generally characterizes the distance between the probability distribution and a nominal distribution, where $\phi$-divergence distance \cite{dro:phi} and Wasserstein distance \cite{dro:wasserstein,drccp:wasserstein} are commonly used to measure the distance between two probability distributions, and the nominal distribution is usually taken as the empirical distribution.  
The metric-based ambiguity set with the empirical distribution as nominal distribution captures the multimodality spontaneously, because the empirical distribution reflects the multimodality implied by the samples. However, the effect of the metric-based ambiguity set capturing the multimodality also depends on the radius. When the radius of the metric-based ambiguity set is large, it may lead to over-conservatism as well.

In a word, in terms of uncertainty modelling in DRO, it always leads to over-conservatism if the multimodality of the uncertain data is ignored. 
Therefore, we propose a globalized distributionally robust optimization framework with multiple core sets. It efficiently handles the multimodality of uncertain data, and improves the conservatism of DRO with moment-based ambiguity set significantly, even improves the conservatism of DRO with metric-based ambiguity set as well.
In this framework, the globalized distributionally robust optimization (GDRO) model, introduced by our previous work \cite{gdro}, is extended to the case with multiple core sets, accordingly denoted as MGDRO model.

In the MGDRO model, the sample space is large enough to contain the support set of the true probability distribution so that the robustness is guaranteed. Multiple core sets are constructed to capture all clustering regions of the sample points, from the perspective of the center, shape and scale. Similarly to that in GDRO \cite{gdro}, the core sets mainly contain the random vectors that are more likely to occur under each state. 
Then the degree of conservatism can be controlled by penalizing the minimum distance between the random vector and multiple core sets in the objective. Different penalty coefficients and distance functions can also be adopted to assist in capturing different probability densities in each region. The penalty item weakens the impact of the regions outside of the core sets on the expectation of the objective function, so that the multimodality is well characterized and the degree of conservatism is reduced.

We are aware of that the term ``coreset" has been widely used as a data reduction/summarization method, for turning big data into tiny data \cite{coreset,coreset2}. Such coreset is a compressed data set, which takes up significantly less space in memory, but approximates the original large data set, as stated in \cite{coreset}. 
It has been applied to DRO problems with Wasserstein-distance-based ambiguity set for reducing the computational complexity \cite{dro:coreset}. Different from that, the core sets we adopted here are some small sets containing the random vectors with high probability, which aim to capture the multimodality of the uncertain data and reduce the degree of conservatism, and take no account of the sample size. 

Especially, there are a few works constructing structual ambiguity set to capture the multimodality of random vectors in DRO.
Hanasusanto et al.\cite{dro:multimodal} constructed a mixture of some moment-based ambiguity sets with different supports for distributionally robust multi-item newsvendor problems. The different supports represent the supports of data under different states, and the moments in each ambiguity set is the conditional moments under every state.
Zhao et al.\cite{dro:ehs} proposed a two-stage DRO model with multimodal ambiguity set for energy hub systems. Their multimodal ambiguity set, based on the work of Hanasusanto et al.\cite{dro:multimodal}, added an uncertain variable $\tilde{s}$ to represent modality information and also adopted the equalities involved with the conditional moments with respect to $\tilde{s}$ as constraints.
Besides, Moulton \cite{rf} proposed a robust fragmentation model, fragmenting the support set of the uncertain data into some subregions $\{\Omega_k\}_{k=1}^K$ with probabilities $\{p_k\}_{k=1}^K$. They used equality constraints on conditional moments under each subregion and an inequality constraint on $\bm{p}$ to construct the ambiguity set. 
The event-wise ambiguity set proposed by Chen et al.\cite{rsome}, can also be adopted to model ambiguous multimodal distributions.

The works mentioned above and our MGDRO framework all consider utilizing some subregions of the support set to handle the multimodality of the uncertain data. However, there are some differences in the choices and meanings of the subregions. In their works, the subregions are assumed to be the support sets under each state, or the union of all subregions is assumed to contain the whole support set of the true probability distribution. In the MGDRO framework, a sufficiently large sample space contains the support set of the true probability distribution, while the union of the core sets does not necessarily contain the support set. The core set for every state only contains the random vectors that are more likely to occur, instead of all sample points belonging to that state. In addition, we handle the multimodality of the uncertain data by a penalty function in the optimization objective, while they design a structual ambiguity set involved with some conditional moment constraints. In the MGDRO model, the ambiguity set can be chosen as a moment-based or metric-based one, capturing some original information of the sample data. For instance, in the moment-based ambiguity set, the moments on the full support set instead of conditional moments are adopted, since we hope to use original moments information of the sample data and avoid extra errors caused by a clustering algorithm as much as possible.

We highlight the following main contributions of this paper.
\begin{itemize}
\item We propose a globalized distributionally robust optimization framework with multiple core sets, as an extension of the GDRO \cite{gdro}, efficiently handling the multimodality of the uncertain data. 
\item We prove that both of the MGDRO models with the moment-based and metric-based ambiguity set can be reformulated as semi-definite programs (SDP), with some appropriate choices of the sample space, core sets and distance function, under the assumption that the objective function is piecewise linear.
\item The MGDRO framework can be successfully applied to a multi-product newsvendor problem with multimodal demands and shows great advantages compared with some other models.
\end{itemize}

The remainder of this paper is organized as follows. In Section \ref{sec:gdro}, we introduce the MGDRO model and derive the computationally tractable reformulation under certain assumptions. Section \ref{sec:app} presents the application on a multi-product newsvendor problem and some numerical results. Section \ref{sec:cons} presents the conclusions.

\emph{Notations.}  $\bm{x} \in \mathbb{R}^n$ is the decision vector, and $X \subseteq \mathbb{R}^n$ is the feasible region of $\bm{x}$. $\Xi \subseteq \mathbb{R}^p$ is the sample space, and $\mathcal{B}(\Xi)$ is the Borel $\sigma$-algebra on $\Xi$. $\mathcal{P}(\Xi)$ is the set of all probability measures or distributions on measurable space $(\Xi,\mathcal{B}(\Xi))$. $\mathcal{M}^d(\Xi) \subseteq \mathcal{P}(\Xi)$ is the set of probability distributions with $E_P[||\bm{\xi}||^d]<\infty$. $\bm{\xi}$ is a random vector that satisfies $\bm{\xi}(\bm{\omega})=\bm{\omega}$ in probability space $(\Xi,\mathcal{B}(\Xi),P)$. $E_P[\cdot]$ represents the expectation with respect to $\bm{\xi}$. The set of samples is denoted as $\{\hat{\bm{\xi}}_j\}_{j=1}^N$, and the empirical distribution $\hat{P}_N= \frac{1}{N} \sum_{j=1}^N \delta_{\hat{\bm{\xi}}_j}$ is the uniform distribution on $\{\hat{\bm{\xi}}_j\}_{j=1}^N$, where $\delta_{\hat{\bm{\xi}}_j}$ is the Dirac point mass at $\hat{\bm{\xi}}_j$.
$\bm{\mu}_0$ is the sample mean, and $\bm{\Sigma}_0$ is the sample covariance matrix. $Y_i,i=1,\ldots,m$ are the core sets of MGDRO models. $\mathbb{R}^n$ is the set of real $n$-dimensional vectors; $\mathbb{S}^n$ is the set of real $n\times n$ symmetric matrices; and $\mathbb{S}^n_+$ and $\mathbb{S}^n_{++}$ are the sets of positive semi-definite and positive definite matrices in $\mathbb{S}^n$, respectively. $\bm{e}_p \in \mathbb{R}^p$ is a vector of 1, and $\bm{I}_p \in \mathbb{R}^{p \times p}$ is the identity matrix. For any function $g:\mathbb{R}^n \to \mathbb{R}$, we let $\mathrm{dom}(g)=\{\bm{x} \in \mathbb{R}^n | g(\bm{x}) < \infty \}$. The convex conjugate function of $g$ is defined as
$$g^*(\bm{y})=\sup \limits_{\bm{x}\in \mathrm{dom}(g)} \{\bm{y}^{T} \bm{x} -g(\bm{x})\}.$$
The concave conjugate function of $g$ is defined as
$$g_*(\bm{y})=\inf \limits_{\bm{x}\in \mathrm{dom}(-g)} \{\bm{y}^{T} \bm{x} -g(\bm{x})\}.$$
For a function $f(\cdot,\cdot)$ with two variables, let $f^*(\cdot,\cdot)$ and $f_*(\cdot,\cdot)$ denote the convex and concave conjugate functions with respect to the first variable, respectively, and let $f^{**}(\cdot,\cdot)$ and $f_{**}(\cdot,\cdot)$ denote the convex and concave conjugate functions with respect to both variables, respectively. The indicator function for set $S$ is defined as follows:
\begin{equation*}
   \delta(\bm{x}|S)=
   \left\{ \begin{aligned}  0 \qquad &\mathrm{if} \quad \bm{x} \in S, \\  \infty \qquad & \mathrm{otherwise}. \end{aligned}\right.
\end{equation*}

\section{MGDRO models and reformulations}
\label{sec:gdro}
\subsection{Introduction to MGDRO models}
The basic DRO model is generally formulated as
\begin{equation}
\label{eq:dro}
\min \limits_{\bm{x} \in X} \max \limits_{P \in \mathcal{D}} E_{P}[h(\bm{x},\bm{\xi})],
\end{equation}
where $\bm{x} \in \mathbb{R}^n$ is the decision vector, $X \subseteq \mathbb{R}^n$ is a closed convex set of feasible solutions, $h(\bm{x},\bm{\xi})$ is a convex function in $\bm{x}$ that depends on the parameter $\bm{\xi} \in \mathbb{R}^p$, $\mathcal{D} \subseteq \mathcal{P}(\Xi)$ is the ambiguity set, $\mathcal{P}(\Xi)$ is the set of all probability measures on the measurable space $(\Xi,\mathcal{B}(\Xi))$, $\Xi \subseteq \mathbb{R}^p$ is the sample space, and $\mathcal{B}(\Xi)$ is the Borel $\sigma$-algebra on $\Xi$.

To capture the multimodality of the data, we present an extension of the GDRO \cite{gdro} model, denoted as MGDRO, which realizes the generalization from a single core set to multiple core sets. 
\begin{equation}
\label{eq:mgdro1}
\min \limits_{\bm{x} \in X} \max \limits_{P \in \mathcal{D}} E_{P}[h(\bm{x},\bm{\xi})- \min \limits_{1 \leq i \leq m} \theta_i \min \limits_{\bm{\xi}' \in Y_i} \phi_i(\bm{\xi},\bm{\xi}')],
\end{equation}
where the core sets $Y_i \subseteq \Xi, i=1,\ldots,m$ are convex and compact, $\theta_i > 0, i=1,\ldots,m$ are the penalty coefficients, $\phi_i:\mathbb{R}^p \times \mathbb{R}^p \to \mathbb{R}, i=1,\ldots,m$ are closed, jointly convex and nonnegative functions for which $\phi_i(\bm{\xi},\bm{\xi})=0$ for all $\bm{\xi} \in \mathbb{R}^p$. The MGDRO model is reduced to the GDRO model \cite{gdro} when $m=1$. 

The MGDRO model (\ref{eq:mgdro1}) can be viewed as a penalized framework, where the penalty item is the minimum distance from the random vector to $m$ core sets. The penalty item weakens the impact of the regions outside of the core sets on the expectation of the objective function, thereby highlighting the impact of the multimodality. 
The MGDRO model (\ref{eq:mgdro1}) is reduced to the DRO model (\ref{eq:dro}) if there exists a certain $\theta_{i_0} = 0$. Therefore, we require that $\theta_i > 0, i=1,\ldots,m$. 
The larger $\theta_{i_0}$ means the less attention paid to the region outside of $Y_{i_0}$. 
If all $\theta_{i} \to \infty, i=1,\ldots,m$, the MGDRO model (\ref{eq:mgdro1}) becomes the DRO model (\ref{eq:dro}) with $\Xi = \cup_{i=1}^m Y_{i}$.

In this paper, the core sets and sample space are defined by
\begin{equation}\label{eq:mcsandss}
\begin{split}
Y_i & = \{ \bm{\xi}_i + \bm{A}_i \bm{\zeta} | \bm{\zeta} \in Z_{1i} \}, i=1,\ldots,m \\
\Xi & = \{ \bm{\xi}_0 + \bm{A} \bm{\zeta} | \bm{\zeta} \in Z_2 \},
\end{split}
\end{equation}
where $\bm{\xi}_0$ is the ``nominal value" of $\bm{\xi}$, $\bm{\xi}_i$ is the center of the region $i$ that can be found by clustering algorithms, $\bm{A}_i$ and $\bm{A}$ are the ``perturbation matrices", $\bm{\zeta} \in \mathbb{R}^L$ is the random vector of ``primitive uncertainties", $Z_{1i}$ is a nonempty, compact and convex set with $0 \in ri(Z_{1i})$, $Z_2$ is also convex, and $\bm{\xi}_0,\bm{\xi}_i,\bm{A}_i,\bm{A},Z_{1i},Z_2$ satisfy $Y_i \subseteq \Xi$.

The moment-based and metric-based ambiguity sets, denoted as $\mathcal{D}_M$ and $\mathcal{D}_W$ respectively, are both considered in this paper.
The moment-based ambiguity set considered here is the same as that in \cite{dro:ye}.
\begin{equation}\label{eq:ambiguity1}
   \mathcal{D}_M=
   \left\{
    P \in \mathcal{P}(\Xi)
   \middle\arrowvert
   \begin{array}{lcl}
    & (E_{P}[\bm{\xi}]-\bm{\mu}_0)^{T}\bm{\Sigma}_0^{-1}(E_{P}[\bm{\xi}]-\bm{\mu}_0) \leq \gamma_1  \\
    & E_{P}[(\bm{\xi}-\bm{\mu}_0)(\bm{\xi}-\bm{\mu}_0)^{T}] \preceq \gamma_2\bm{\Sigma}_0\\
    & P(\bm{\xi} \in \Xi) = 1
   \end{array}
   \right\},
\end{equation}
where $\bm{\mu}_0$ is the sample mean, $\bm{\Sigma}_0$ is the sample covariance matrix, $\gamma_1$ and $\gamma_2$ are estimated by statistical methods to guarantee that the true mean and covariance matrix are contained in confidence regions with high probability. The specific estimator of $\gamma_1$ and $\gamma_2$ can refer to \cite{dro:ye,gdro}.

The metric-based ambiguity set considered here is the same as that in \cite{dro:wasserstein}, which uses Wasserstein distance to measure the distance between two probability distributions.
\begin{equation}\label{eq:ambiguity_w}
\mathcal{D}_W = \{P \in \mathcal{M}^d(\Xi) \arrowvert W_d(P,\hat{P}_N) \leq r \},
\end{equation}
where $\hat{P}_N = \frac{1}{N} \sum_{j=1}^N \delta_{\hat{\bm{\xi}}_j}$ is the empirical distribution, $\delta_{\hat{\bm{\xi}}_j}$ is the Dirac point mass at the $i$-th sample $\hat{\bm{\xi}}_j$, $\mathcal{M}^d(\Xi)$ is the set of probability distributions with $E_P[||\bm{\xi}||^d]<\infty$,
and the Wasserstein distance is defined as 
\begin{equation}\label{eq:wass}
\footnotesize
W_d(P,Q) = \inf \left\{\left(\int_{\Xi^2} ||\bm{\xi} - \bm{\eta}||^d \Pi(d\bm{\xi},d\bm{\eta})\right)^{1/d} \middle\arrowvert \begin{split} \Pi \text{ is a joint distribution of} \\ \bm{\xi} \text{ and } \bm{\eta} \text{ with marginals}\\ P \text{ and } Q, \text{ respectively} \end{split} \right\}.
\end{equation}
In the following, we consider the simple case $d=1$ and omit $d$ accordingly.

\subsection{Reformulations of MGDRO models}\label{sec:refom}
The MGDRO model, as an extension of the GDRO model, can also be reformulated similarly as that in \cite{gdro}. Therefore, the same assumption with respect to the objective function is made as follows.

\begin{assumption}\label{as:plinear}
$h(\bm{x},\bm{\xi})$ is a piecewise linear function in $\bm{\xi}$ as follows:
$$h(\bm{x},\bm{\xi})=\max \limits_{k\in \{1,2,...,K\}} \{\bm{a}_k(\bm{x})^{T} \bm{\xi}+b_k(\bm{x})\},$$
where $\bm{a}_k(\bm{x})$ and $b_k(\bm{x})$ are linear in $\bm{x}$ for all $k$.
\end{assumption}

Here we introduce a key lemma for reformulating the MGDRO models.

\begin{lemma}[Ben-Tal et al. \cite{gro} Theorem 1]
\label{lemma:gro}
Let $f(\cdot,\bm{x})$ be a closed proper concave function in $\mathbb{R}^p$ for all $\bm{x} \in \mathbb{R}^n$ and $\phi:\mathbb{R}^p \times \mathbb{R}^p \to \mathbb{R}$ be a closed, jointly convex, and nonnegative function for which $\phi(\bm{\xi},\bm{\xi})=0$ for all $\bm{\xi} \in \mathbb{R}^p$. Let  the set $U_1 \subseteq \mathbb{R}^p$ be nonempty, convex and compact with $0 \in ri(U_1)$, the set $U_2$ be closed and convex, satisfying $U_1 \subseteq U_2$.

Then $\bm{x}$ satisties:
\begin{equation}
\label{eq:grocons1}
f(\bm{\xi},\bm{x}) \leq \min \limits_{\bm{\xi}' \in U_1} \phi(\bm{\xi},\bm{\xi}') \qquad \forall \bm{\xi} \in U_2,
\end{equation}
if and only if there exist $\bm{v},\bm{w} \in \mathbb{R}^p$ satisfying the single inequality
\begin{equation}
\label{eq:grocons2}
\delta^*(\bm{v}|U_1) + \delta^*(\bm{w}|U_2)-f_*(\bm{v}+\bm{w},\bm{x})+\phi^{**}(\bm{v},-\bm{v}) \leq 0.
\end{equation}
\end{lemma}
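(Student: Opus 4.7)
The plan is to recast the universally quantified inequality \eqref{eq:grocons1} as a single ``$\inf\ge 0$'' over a convex-minus-concave objective, apply Fenchel--Rockafellar duality, and then use the boundary property $\phi(\bm{\xi},\bm{\xi})=0$ to collapse two dual vectors into the single pair $(\bm{v},-\bm{v})$ appearing in \eqref{eq:grocons2}.

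First I would observe that \eqref{eq:grocons1} is equivalent to $f(\bm{\xi},\bm{x})\le\phi(\bm{\xi},\bm{\xi}')$ for every $(\bm{\xi},\bm{\xi}')\in U_2\times U_1$, i.e.\
$$\inf_{\bm{\xi},\bm{\xi}'}\bigl\{p(\bm{\xi},\bm{\xi}')-G(\bm{\xi},\bm{\xi}')\bigr\}\ge 0,$$
with $p(\bm{\xi},\bm{\xi}'):=\phi(\bm{\xi},\bm{\xi}')+\delta(\bm{\xi}|U_2)+\delta(\bm{\xi}'|U_1)$ closed proper convex and $G(\bm{\xi},\bm{\xi}'):=f(\bm{\xi},\bm{x})$ closed proper concave. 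By Fenchel--Rockafellar strong duality this is equivalent to the existence of $(\bm{u}_1,\bm{u}_2)$ with $G_*(\bm{u}_1,\bm{u}_2)-p^*(\bm{u}_1,\bm{u}_2)\ge 0$. Since $G$ is independent of its second block, $G_*(\bm{u}_1,\bm{u}_2)=-\infty$ unless $\bm{u}_2=0$, in which case $G_*(\bm{u}_1,0)=f_*(\bm{u}_1,\bm{x})$; and by the sum rule for joint convex conjugates,
$$p^*(\bm{u}_1,0)=\inf\bigl\{\phi^{**}(\bm{s}_1,\bm{s}_2)+\delta^*(\bm{t}_1|U_2)+\delta^*(\bm{t}_2|U_1):\bm{s}_1+\bm{t}_1=\bm{u}_1,\ \bm{s}_2+\bm{t}_2=0\bigr\}.$$

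The decisive structural fact is that $\phi(\bm{\xi},\bm{\xi})=0$ forces $\phi^{**}(\bm{s}_1,\bm{s}_2)=+\infty$ whenever $\bm{s}_1+\bm{s}_2\ne 0$: restricting the defining supremum to $\bm{\xi}'=\bm{\xi}$ yields $\sup_{\bm{\xi}}(\bm{s}_1+\bm{s}_2)^T\bm{\xi}=+\infty$. Hence the inner infimum above is active only on the affine slice $\bm{s}_2=-\bm{s}_1$, which combined with $\bm{s}_2+\bm{t}_2=0$ also forces $\bm{t}_2=\bm{s}_1$. Relabeling $\bm{v}:=\bm{s}_1$, $\bm{w}:=\bm{t}_1$ and absorbing $\bm{u}_1=\bm{v}+\bm{w}$ into the outer supremum, the dual condition collapses to the existence of $\bm{v},\bm{w}$ with
$$f_*(\bm{v}+\bm{w},\bm{x})-\phi^{**}(\bm{v},-\bm{v})-\delta^*(\bm{w}|U_2)-\delta^*(\bm{v}|U_1)\ge 0,$$
which is exactly \eqref{eq:grocons2}.

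The hardest step will be verifying strong duality and the conjugate sum rule rigorously, i.e.\ checking a relative-interior intersection condition of the form $ri(\mathrm{dom}\,p)\cap ri(\mathrm{dom}\,G)\ne\emptyset$ together with its analogue for the three-summand decomposition of $p^*$. This is where the hypotheses pull their weight: $0\in ri(U_1)$ with $U_1\subseteq U_2$ and $U_1$ compact produces an interior feasible point on the diagonal (which lies in $\mathrm{dom}\,\phi$ thanks to $\phi(\bm{\xi},\bm{\xi})=0$); closed properness of $f(\cdot,\bm{x})$ keeps $G$ proper concave; and nonnegativity of $\phi$ prevents the objective from drifting to $-\infty$ along recession directions. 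Once those qualification conditions are granted, the remainder is the conjugate-calculus manipulation sketched above.
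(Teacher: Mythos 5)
The paper offers no proof of this lemma---it is imported verbatim from Ben-Tal et al.\ \cite{gro}---and your Fenchel-duality reconstruction is essentially the argument of that source: rewrite \eqref{eq:grocons1} as nonnegativity of $\inf_{(\bm{\xi},\bm{\xi}')\in U_2\times U_1}\{\phi(\bm{\xi},\bm{\xi}')-f(\bm{\xi},\bm{x})\}$, dualize, and use the fact that $\phi(\bm{\xi},\bm{\xi})=0$ forces $\phi^{**}(\bm{s}_1,\bm{s}_2)=+\infty$ off the slice $\bm{s}_2=-\bm{s}_1$; that last observation is exactly what produces the pair $(\bm{v},-\bm{v})$ in \eqref{eq:grocons2}, and your verification of it is correct. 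Both directions are covered: the ``if'' direction is weak duality via three Fenchel--Young inequalities, and for the ``only if'' direction the exactness of the infimal convolution in your expression for $p^*(\bm{u}_1,0)$ is automatic because $\phi$ is real-valued, hence continuous, on all of $\mathbb{R}^{2p}$, so $ri(\mathrm{dom}\,\phi)\cap ri(U_2\times U_1)\neq\emptyset$ holds trivially.

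The one step you flag but do not actually close is the right one to worry about. Fenchel's duality theorem with dual attainment requires $ri(\mathrm{dom}\,p)\cap ri(\mathrm{dom}\,G)\neq\emptyset$, which in your decomposition reads $ri(U_2)\cap ri(\mathrm{dom}\,f(\cdot,\bm{x}))\neq\emptyset$, and this is \emph{not} implied by the hypotheses as stated: a closed proper concave $f(\cdot,\bm{x})$ could have an effective domain that misses $ri(U_2)$, and the conditions $0\in ri(U_1)$, $U_1\subseteq U_2$ say nothing about $f$ (they only guarantee that $U_2\times U_1$ has a relative-interior point, on all of which $\phi$ is finite). This is a defect of the lemma as transcribed rather than of your argument---the cited source carries the corresponding regularity assumption---and it is harmless in every use made of the lemma in this paper, where $f$ is a finite-valued (quadratic, or piecewise-linear-minus-norm) function of $\bm{\xi}$, so that $\mathrm{dom}\,f(\cdot,\bm{x})=\mathbb{R}^p$ and the qualification is automatic. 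If you want your proof to stand alone, either add $ri(U_2)\cap ri(\mathrm{dom}\,f(\cdot,\bm{x}))\neq\emptyset$ as an explicit hypothesis or note that you invoke the lemma only for finite-valued $f$.
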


Lemma \ref{lemma:gro} transforms the semi-infinite constraint (\ref{eq:grocons1}) into a single inequality (\ref{eq:grocons2}), and decouples the items involved with $U_1$, $U_2$ and $\phi$. Thus, it is essential to the computationally tractable reformulation of the MGDRO model.

\subsubsection{MGDRO model with moment-based ambiguity set}
Since the same ambiguity set with \cite{dro:ye} is considered here, the following lemma contributes to the reformulation of the MGDRO model with the moment-based ambiguity set $\mathcal{D}_{M}$.
\begin{lemma}[Delage and Ye \cite{dro:ye} Lemma 1]\label{lemma:dual}
Suppose  $\gamma_1 >0,\gamma_2 >0$ and $\bm{\Sigma}_0\in \mathbb{S}^p_{++}$ in the ambiguity set (\ref{eq:ambiguity1}), and that $h(\bm{x},\bm{\xi})$ is integrable for all $P \in \mathcal{D}$ for fixed $\bm{x} \in X$. Then DRO (\ref{eq:dro}) with ambiguity set (\ref{eq:ambiguity1}) can be reformulated as 
\begin{equation}\label{eq:redro}
\begin{split}
\min \limits_{\bm{x},\bm{\Lambda},\bm{q},t} \quad & t+\bm{\Lambda} \cdot (\gamma_2 \bm{\Sigma}_0 +\bm{\mu}_0 \bm{\mu}_0^{T})+ \sqrt{\gamma_1}||\bm{\Sigma}_0^{1/2} (\bm{q}+2\bm{\Lambda} \bm{\mu}_0)||_2 + \bm{q}^{T} \bm{\mu}_0 \\
s.t. \quad & h(\bm{x},\bm{\xi})-\bm{\xi}^{T}\bm{\Lambda} \bm{\xi} - \bm{q}^{T} \bm{\xi} \leq t \qquad \forall \bm{\xi} \in \Xi, \\
& \bm{\Lambda} \in \mathbb{S}_+^p, \quad \bm{q} \in \mathbb{R}^p, \quad t\in\mathbb{R}, \\
& \bm{x} \in X\subseteq \mathbb{R}^n,
\end{split}
\end{equation}
where $\bm{\Sigma}_0^{1/2}$ is the decomposition of $\bm{\Sigma}_0$ such that: $\bm{\Sigma}_0=(\bm{\Sigma}_0^{1/2})^T\bm{\Sigma}_0^{1/2}$.
\end{lemma}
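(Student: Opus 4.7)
The plan is to derive the reformulation via Lagrangian duality applied to the inner maximization. Fix $\bm{x} \in X$ and treat
$$\max_{P \in \mathcal{D}_M}\, \int_\Xi h(\bm{x},\bm{\xi})\, dP(\bm{\xi})$$
as an infinite-dimensional conic moment problem over nonnegative measures on $(\Xi,\mathcal{B}(\Xi))$, with the SDP constraint $E_P[(\bm{\xi}-\bm{\mu}_0)(\bm{\xi}-\bm{\mu}_0)^T] \preceq \gamma_2\bm{\Sigma}_0$, the SOC-representable ellipsoidal constraint $||\bm{\Sigma}_0^{-1/2}(E_P[\bm{\xi}]-\bm{\mu}_0)||_2 \leq \sqrt{\gamma_1}$, and the mass normalization $\int dP = 1$. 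First I would assign multipliers $\bm{\Lambda} \in \mathbb{S}^p_+$, $\bm{q} \in \mathbb{R}^p$ (for the ellipsoidal/mean constraint, dualized below via Cauchy--Schwarz), and $t \in \mathbb{R}$ for normalization, form the Lagrangian, and exchange the supremum over $P$ with the infimum over the multipliers.

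The inner supremum of the Lagrangian over arbitrary nonnegative measures on $\Xi$ is finite only if the pointwise dominance
$$h(\bm{x},\bm{\xi}) - \bm{\xi}^T\bm{\Lambda}\bm{\xi} - \bm{q}^T\bm{\xi} \leq t \qquad \forall\,\bm{\xi} \in \Xi$$
holds, which is precisely the semi-infinite constraint in (\ref{eq:redro}). To read off the dual objective I would expand the covariance inequality to get $\bm{\Lambda}\cdot E_P[\bm{\xi}\bm{\xi}^T] \leq \gamma_2\bm{\Lambda}\cdot\bm{\Sigma}_0 + 2\bm{\mu}_0^T\bm{\Lambda}E_P[\bm{\xi}] - \bm{\Lambda}\cdot\bm{\mu}_0\bm{\mu}_0^T$, split $E_P[\bm{\xi}] = \bm{\mu}_0 + (E_P[\bm{\xi}] - \bm{\mu}_0)$, and apply Cauchy--Schwarz with $\bm{\Sigma}_0^{1/2}$ and $\bm{\Sigma}_0^{-1/2}$ on the cross term together with the ellipsoid bound. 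Regrouping the quadratic-in-$\bm{\mu}_0$ pieces (which turns $-\bm{\Lambda}\cdot\bm{\mu}_0\bm{\mu}_0^T$ into $+\bm{\Lambda}\cdot\bm{\mu}_0\bm{\mu}_0^T$ after absorbing $2\bm{\mu}_0^T\bm{\Lambda}\bm{\mu}_0$) then yields the SDP contribution $\bm{\Lambda}\cdot(\gamma_2\bm{\Sigma}_0 + \bm{\mu}_0\bm{\mu}_0^T)$, the SOC contribution $\sqrt{\gamma_1}||\bm{\Sigma}_0^{1/2}(\bm{q}+2\bm{\Lambda}\bm{\mu}_0)||_2$, and the residual linear term $\bm{q}^T\bm{\mu}_0$, exactly reproducing the objective in (\ref{eq:redro}). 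This establishes weak duality: the inner optimum of (\ref{eq:redro}) upper-bounds $\max_{P \in \mathcal{D}_M} E_P[h(\bm{x},\bm{\xi})]$.

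The main obstacle is the reverse direction, namely strong duality for this infinite-dimensional conic program. My plan is to invoke a Slater-type interior condition: since $\gamma_1 > 0$, $\gamma_2 > 0$, and $\bm{\Sigma}_0 \succ 0$, the moment constraints admit strictly feasible probability measures (for instance, a suitable mollification of $\delta_{\bm{\mu}_0}$ supported in $\Xi$), while the integrability hypothesis on $h(\bm{x},\cdot)$ rules out an infinite gap. Classical strong-duality results for generalized moment problems (for example, Isii's theorem or the Shapiro conic-duality framework) then yield zero gap together with dual attainment. Combining with the outer $\min_{\bm{x}\in X}$ finally produces the joint minimization in (\ref{eq:redro}), as established in \cite{dro:ye}.
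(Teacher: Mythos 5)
This lemma is quoted from Delage and Ye \cite{dro:ye} and the paper gives no proof of its own, so the only meaningful comparison is with the source: your reconstruction follows exactly the standard argument there, namely weak duality via the explicit Cauchy--Schwarz/trace bounds on the Lagrangian of the inner moment problem (your algebra for the $\bm{\Lambda}\cdot(\gamma_2\bm{\Sigma}_0+\bm{\mu}_0\bm{\mu}_0^T)$ and $\sqrt{\gamma_1}\|\bm{\Sigma}_0^{1/2}(\bm{q}+2\bm{\Lambda}\bm{\mu}_0)\|_2$ terms checks out), plus strong duality for the generalized moment problem under the Slater condition that $\gamma_1>0$, $\gamma_2>0$, $\bm{\Sigma}_0\succ 0$ make a measure concentrated near $\bm{\mu}_0\in\Xi$ strictly feasible. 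The proposal is correct and takes essentially the same route as the cited proof.
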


Lemma \ref{lemma:dual} effectively deals with the $\min_x$ and $\max_P$ operations in (\ref{eq:dro}) by Lagrangian dual of the inner maximation problem, and removes the difficulty of calculating the expectation.

Based on these preparations, the computationally tractable reformulation of MGDRO model with ambiguity set $\mathcal{D}_{M}$ is presented in Theorem \ref{thm:mgdro1}.
\begin{theorem}\label{thm:mgdro1}
Suppose $\gamma_1 >0,\gamma_2 >0,\bm{\Sigma}_0\in \mathbb{S}^p_{++}$ and Assumption \ref{as:plinear} are satisfied. For a fixed $\bm{x} \in X$, we assume that $h(\bm{x},\bm{\xi})- \min_{1 \leq i \leq m} \theta_i \min_{\bm{\xi}' \in Y_i} \phi_i(\bm{\xi},\bm{\xi}')$ is integrable for all $P \in \mathcal{D}$. $Y_i$ and $\Xi $ are defined as (\ref{eq:mcsandss}). Then, MGDRO model (\ref{eq:mgdro1}) with ambiguity set (\ref{eq:ambiguity1}) can be reformulated as 
\begin{small}
\begin{equation}\label{eq:mgdrc1}
\begin{split}
\min \limits_{\mbox{\scriptsize$\begin{array}{c}\bm{x},\bm{\Lambda},\bm{q},t,\\ \bm{v}_{ik}, \bm{w}_{ik}, z^1_{ik}, z^2_{ik} \end{array}$}} \quad & t+\bm{\Lambda} \cdot (\gamma_2 \bm{\Sigma}_0 +\bm{\mu}_0 \bm{\mu}_0^{T})+ \sqrt{\gamma_1}||\bm{\Sigma}_0^{1/2} (\bm{q}+2\bm{\Lambda} \bm{\mu}_0)||_2 + \bm{q}^{T} \bm{\mu}_0 \\
s.t. \quad
&\begin{bmatrix}
      \bm{\Lambda} & \dfrac{1}{2} (\bm{v}_{ik} + \bm{w}_{ik} +\bm{q}-\bm{a}_k(\bm{x}))\\[1.5em]
      \dfrac{1}{2}(\bm{v}_{ik} + \bm{w}_{ik} +\bm{q}-\bm{a}_k(\bm{x}))^{T} & \begin{aligned}t-b_k(\bm{x})-\bm{\xi}_i^{T}\bm{v}_{ik} - \bm{\xi}_0^{T}\bm{w}_{ik}\\- \theta_i \phi_i^{**}(\frac{\bm{v}_{ik}}{\theta_i},-\frac{\bm{v}_{ik}}{\theta_i})-z^1_{ik} -z^2_{ik} \end{aligned}\\
      \end{bmatrix}  \succeq 0, \\[0.5em]
& \delta^*(\bm{A}_i^{T}\bm{v}_{ik}|Z_{1i}) \leq z^1_{ik}, \\[0.5em]
& \delta^*(\bm{A}^{T}\bm{w}_{ik}|Z_{2}) \leq z^2_{ik}, \\[0.5em]
& \bm{v}_{ik},\bm{w}_{ik} \in \mathbb{R}^p, \quad z^1_{ik},z^2_{ik} \in \mathbb{R}, \quad 1\leq k\leq K,1 \leq i \leq m,\\[0.5em]
& \bm{\Lambda} \in \mathbb{S}^p_+, \quad \bm{q} \in \mathbb{R}^p, \quad t\in\mathbb{R} ,\\[0.5em]
& \bm{x} \in X\subseteq\mathbb{R}^n.
\end{split}
\end{equation}
\end{small}
\end{theorem}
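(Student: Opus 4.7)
The plan is to reduce the MGDRO model to the DRO structure of Lemma \ref{lemma:dual} and then discharge the resulting semi-infinite constraint via Lemma \ref{lemma:gro}. The first move is to rewrite the integrand as a single max. Using $-\min_i(\cdot) = \max_i(-\cdot)$ together with the piecewise linear form of $h$ from Assumption \ref{as:plinear}, the inner objective becomes
\[
h(\bm{x},\bm{\xi})-\min_{1\le i\le m}\theta_i\min_{\bm{\xi}'\in Y_i}\phi_i(\bm{\xi},\bm{\xi}')
=\max_{\substack{1\le k\le K\\1\le i\le m}}\Bigl\{\bm{a}_k(\bm{x})^{T}\bm{\xi}+b_k(\bm{x})-\theta_i\min_{\bm{\xi}'\in Y_i}\phi_i(\bm{\xi},\bm{\xi}')\Bigr\}.
\]
Because $\theta_i\phi_i\ge 0$, this function is integrable under the hypothesis, so Lemma \ref{lemma:dual} applies. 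Invoking it produces variables $(\bm{\Lambda},\bm{q},t)$ and the usual linear+moment objective already appearing in (\ref{eq:mgdrc1}), together with the semi-infinite constraint ``$\max_{i,k}\{\cdot\}-\bm{\xi}^{T}\bm{\Lambda}\bm{\xi}-\bm{q}^{T}\bm{\xi}\le t$ for every $\bm{\xi}\in\Xi$'', which factors into $mK$ parallel constraints
\[
\bm{a}_k(\bm{x})^{T}\bm{\xi}+b_k(\bm{x})-\bm{\xi}^{T}\bm{\Lambda}\bm{\xi}-\bm{q}^{T}\bm{\xi}-t\ \le\ \theta_i\min_{\bm{\xi}'\in Y_i}\phi_i(\bm{\xi},\bm{\xi}')\qquad\forall\bm{\xi}\in\Xi.
\]

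Next I would apply Lemma \ref{lemma:gro} separately to each $(i,k)$ constraint. Set $f_{ik}(\bm{\xi},\bm{x})=\bm{a}_k(\bm{x})^{T}\bm{\xi}+b_k(\bm{x})-\bm{\xi}^{T}\bm{\Lambda}\bm{\xi}-(\bm{q}^{T}\bm{\xi}+t)$, which is closed proper concave in $\bm{\xi}$ since $\bm{\Lambda}\succeq 0$; take $U_1=Y_i$, $U_2=\Xi$, and the jointly convex nonnegative kernel $\theta_i\phi_i$, which vanishes on the diagonal. Lemma \ref{lemma:gro} then yields dual multipliers $\bm{v}_{ik},\bm{w}_{ik}\in\mathbb{R}^p$ with
\[
\delta^{*}(\bm{v}_{ik}\mid Y_i)+\delta^{*}(\bm{w}_{ik}\mid\Xi)-(f_{ik})_{*}(\bm{v}_{ik}+\bm{w}_{ik},\bm{x})+(\theta_i\phi_i)^{**}(\bm{v}_{ik},-\bm{v}_{ik})\ \le\ 0.
\]
Three elementary reductions will collapse this into the LMI in (\ref{eq:mgdrc1}). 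First, the affine representations in (\ref{eq:mcsandss}) give $\delta^{*}(\bm{v}\mid Y_i)=\bm{\xi}_i^{T}\bm{v}+\delta^{*}(\bm{A}_i^{T}\bm{v}\mid Z_{1i})$ and $\delta^{*}(\bm{w}\mid\Xi)=\bm{\xi}_0^{T}\bm{w}+\delta^{*}(\bm{A}^{T}\bm{w}\mid Z_2)$, motivating the epigraphic slacks $z^{1}_{ik}\ge\delta^{*}(\bm{A}_i^{T}\bm{v}_{ik}\mid Z_{1i})$ and $z^{2}_{ik}\ge\delta^{*}(\bm{A}^{T}\bm{w}_{ik}\mid Z_2)$. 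Second, positive homogeneity of conjugation shows $(\theta_i\phi_i)^{**}(\bm{v},-\bm{v})=\theta_i\phi_i^{**}(\bm{v}/\theta_i,-\bm{v}/\theta_i)$, explaining the scaled term in the (2,2) block. Third, a direct calculation of the concave conjugate of the quadratic $f_{ik}$ gives
\[
(f_{ik})_{*}(\bm{u},\bm{x})=t-b_k(\bm{x})-\tfrac{1}{4}(\bm{u}+\bm{q}-\bm{a}_k(\bm{x}))^{T}\bm{\Lambda}^{+}(\bm{u}+\bm{q}-\bm{a}_k(\bm{x}))
\]
when the linear part lies in $\mathrm{range}(\bm{\Lambda})$ and $-\infty$ otherwise.

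After substituting these into the dual inequality, it takes the shape
\[
t-b_k(\bm{x})-\bm{\xi}_i^{T}\bm{v}_{ik}-\bm{\xi}_0^{T}\bm{w}_{ik}-\theta_i\phi_i^{**}(\bm{v}_{ik}/\theta_i,-\bm{v}_{ik}/\theta_i)-z^{1}_{ik}-z^{2}_{ik}\ \ge\ \tfrac{1}{4}\bm{r}_{ik}^{T}\bm{\Lambda}^{+}\bm{r}_{ik},
\]
with $\bm{r}_{ik}=\bm{v}_{ik}+\bm{w}_{ik}+\bm{q}-\bm{a}_k(\bm{x})$. A standard Schur-complement step converts this quadratic-over-PSD inequality into the block matrix condition displayed in (\ref{eq:mgdrc1}); the Schur form automatically encodes the range condition on $\bm{\Lambda}$ as well. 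Collecting all $mK$ constraints and the original moment-based objective from Lemma \ref{lemma:dual} then yields the SDP (\ref{eq:mgdrc1}).

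\textbf{Main obstacle.} The two delicate points are verifying applicability of Lemma \ref{lemma:gro}: the hypothesis $0\in\mathrm{ri}(U_1)$ does not transfer directly to $Y_i$ when $\bm{\xi}_i\neq 0$, so I expect to need an affine shift $\bm{\xi}\mapsto\bm{\xi}-\bm{\xi}_i$ to move the center to the origin (it is precisely this shift that produces the $\bm{\xi}_i^{T}\bm{v}_{ik}$ and $\bm{\xi}_0^{T}\bm{w}_{ik}$ terms after pushing the support functions back through the affine parametrization of $Y_i$ and $\Xi$). The second, mostly mechanical, difficulty is the Schur-complement rewriting when $\bm{\Lambda}$ is only positive semi-definite rather than definite; the correct interpretation via $\bm{\Lambda}^{+}$ and the implicit range constraint is exactly captured by the LMI, so no separate case analysis should be needed in the final statement.
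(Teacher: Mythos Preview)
Your proposal is correct and follows essentially the same route as the paper: apply Lemma~\ref{lemma:dual} to obtain the semi-infinite constraint, split it into $mK$ pieces via Assumption~\ref{as:plinear} and the $\min_i$, then invoke Lemma~\ref{lemma:gro} on each piece with $U_1=Y_i$, $U_2=\Xi$, $\phi=\theta_i\phi_i$, and the concave quadratic $f_k$. The only expositional difference is in the final LMI step: rather than computing $(f_{ik})_{*}$ explicitly via $\bm{\Lambda}^{+}$ and then appealing to a Schur complement (which, as you note, requires care with the range condition when $\bm{\Lambda}$ is only semidefinite), the paper simply leaves $f_{k*}(\bm y,\cdot)=\inf_{\bm\xi}\{\bm\xi^{T}\bm\Lambda\bm\xi+(\bm y+\bm q-\bm a_k(\bm x))^{T}\bm\xi+t-b_k(\bm x)\}$ in infimum form, so the dual inequality becomes ``quadratic in $\bm\xi$ is nonnegative for all $\bm\xi\in\mathbb{R}^p$'', which is \emph{directly} the LMI in (\ref{eq:mgdrc1}) with no pseudoinverse or range analysis needed; this cleanly dissolves your second obstacle.
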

\begin{proof}
Recall the MGDRO model (\ref{eq:mgdro1}):
\begin{equation*}
\min \limits_{\bm{x} \in X} \max \limits_{P \in \mathcal{D}_M} E_{P}[h(\bm{x},\bm{\xi})- \min \limits_{1 \leq i \leq m} \theta_i \min \limits_{\bm{\xi}' \in Y_i} \phi_i(\bm{\xi},\bm{\xi}')],
\end{equation*}
and the ambiguity set (\ref{eq:ambiguity1}):
\begin{equation*}
   \mathcal{D}_M=
   \left\{
   P \in \mathcal{P}(\Xi)
   \middle\vert
   \begin{array}{lcl}
    & (E_{P}[\bm{\xi}]-\bm{\mu}_0)^{T}\bm{\Sigma}_0^{-1}(E_{P}[\bm{\xi}]-\bm{\mu}_0) \leq \gamma_1  \\
    & E_{P}[(\bm{\xi}-\bm{\mu}_0)(\bm{\xi}-\bm{\mu}_0)^{T}] \preceq \gamma_2\bm{\Sigma}_0\\
    & P(\bm{\xi} \in \Xi) = 1
   \end{array}
   \right\}.
\end{equation*}
With the same proof arguments as for Lemma \ref{lemma:dual}, the  MGDRO model (\ref{eq:mgdro1}) with ambiguity set (\ref{eq:ambiguity1}) can be equivalently reformulated as
\begin{equation}\label{eq:mgdro1:trac1}
\begin{split}
\min \limits_{\bm{x},\bm{\Lambda},\bm{q},t} \quad & t+\bm{\Lambda} \cdot (\gamma_2 \bm{\Sigma}_0 +\bm{\mu}_0 \bm{\mu}_0^{T})+ \sqrt{\gamma_1}||\bm{\Sigma}_0^{1/2} (\bm{q}+2\bm{\Lambda} \bm{\mu}_0)||_2 + \bm{q}^{T} \bm{\mu}_0 \\
s.t. \quad & h(\bm{x},\bm{\xi})-\bm{\xi}^{T}\bm{\Lambda} \bm{\xi} - \bm{q}^{T} \bm{\xi}-t \leq \min \limits_{1 \leq i \leq m} \theta_i \min \limits_{\bm{\xi}' \in Y_i} \phi_i(\bm{\xi},\bm{\xi}') \qquad \forall \bm{\xi} \in \Xi, \\
& \bm{\Lambda} \in \mathbb{S}^p_+, \quad \bm{q} \in \mathbb{R}^p, \quad t\in\mathbb{R} ,\\
& \bm{x} \in X\subseteq\mathbb{R}^n.
\end{split}
\end{equation}
The model (\ref{eq:mgdro1:trac1}) is a convex program with a semi-infinite constraint.
With the Assumption \ref{as:plinear}, the semi-infinite constraint is equivalent to a series of semi-infinite constraints:
\begin{equation}\label{eq:semiconsl}
\bm{a}_k(\bm{x})^{T} \bm{\xi}+b_k(\bm{x})-\bm{\xi}^{T}\bm{\Lambda} \bm{\xi} - \bm{q}^{T} \bm{\xi} - t \leq \theta_i \min \limits_{\bm{\xi}' \in Y_i} \phi_i(\bm{\xi},\bm{\xi}') \quad \forall \bm{\xi} \in \Xi \quad \forall k \quad \forall i.
\end{equation}
Given that 
$U_1 = Y_i = \{ \bm{\xi}_i + \bm{A}_i \bm{\zeta} | \bm{\zeta} \in Z_{1i} \}, U_2 = \Xi = \{ \bm{\xi}_0 + \bm{A} \bm{\zeta} | \bm{\zeta} \in Z_2 \}, \phi = \theta_i \phi_i$,
$f_k(\bm{\xi},(\bm{x},\bm{\Lambda},\bm{q},t))=\bm{a}_k(\bm{x})^{T} \bm{\xi} + b_k(\bm{x}) - \bm{\xi}^{T} \bm{\Lambda} \bm{\xi} - \bm{q}^{T} \bm{\xi}- t$
in Lemma \ref{lemma:gro}. It is obvious that  $f_k(\cdot,(\bm{x},\bm{\Lambda},\bm{q},t))$ is a closed concave function. According to Lemma \ref{lemma:gro}, a single semi-infinite constraint in (\ref{eq:semiconsl}) is equivalent to the existence of $\bm{v}_{ik}, \bm{w}_{ik} \in \mathbb{R}^p$ satisfying
\begin{equation}\label{eq:newcons}
\delta^*(\bm{v}_{ik}|Y_{i}) + \delta^*(\bm{w}_{ik}|\Xi) +(\theta_i \phi_i)^{**}(\bm{v}_{ik},-\bm{v}_{ik}) -f_{k*}(\bm{v}_{ik}+\bm{w}_{ik},(\bm{x},\bm{\Lambda},\bm{q},t))\leq 0, 
\end{equation}
where
\begin{equation*}
\begin{split}
\delta^*(\bm{v}_{ik}|Y_{i})& = \bm{\xi}_i^{T}\bm{v}_{ik} + \delta^*(\bm{A}_i^{T}\bm{v}_{ik}|Z_{1i}) \\
\delta^*(\bm{w}_{ik}|\Xi)& = \bm{\xi}_0^{T}\bm{w}_{ik}+ \delta^*(\bm{A}^{T}\bm{w}_{ik}|Z_2) \\
(\theta_i \phi_i)^{**}(\bm{v}_{ik},-\bm{v}_{ik})& = \theta_i \phi_i^{**}(\frac{\bm{v}_{ik}}{\theta_i},-\frac{\bm{v}_{ik}}{\theta_i}) \\
f_{k,*}(\bm{y},(\bm{x},\bm{\Lambda},\bm{q},t))
& = \inf \limits_{\bm{\xi} \in \mathbb{R}^p} \{\bm{y}^{T} \bm{\xi}-f_k(\bm{\xi},(\bm{x},\bm{\Lambda},\bm{q},t))\} \\
& = \inf \limits_{\bm{\xi} \in \mathbb{R}^p} \{\bm{\xi}^{T} \bm{\Lambda} \bm{\xi} + (\bm{y}+\bm{q}-\bm{a}_k(\bm{x}))^{T}\bm{\xi} + t -b_k(\bm{x}) \}.
\end{split}
\end{equation*} 
Hence, if we substitute these expressions in the inequality (\ref{eq:newcons}), it is equivalent to the following form:
\begin{equation*}
\begin{split}
& \bm{\xi}_i^{T}\bm{v}_{ik} + \bm{\xi}_0^{T}\bm{w}_{ik}+\delta^*(\bm{A}_i^{T}\bm{v}_{ik}|Z_{1i}) + \delta^*(\bm{A}^{T}\bm{w}_{ik}|Z_2)+ \theta_i \phi_i^{**}(\frac{\bm{v}_{ik}}{\theta_i},-\frac{\bm{v}_{ik}}{\theta_i}) \\
& \leq \bm{\xi}^{T} \bm{\Lambda} \bm{\xi} + (\bm{v}_{ik} + \bm{w}_{ik} +\bm{q}-\bm{a}_k(\bm{x}))^{T}\bm{\xi} + t -b_k(\bm{x}) \quad\forall \bm{\xi} \in \mathbb{R}^p,\\
\end{split}
\end{equation*}
Futhermore, it is equivalent to a linear matrix inequality:
\begin{equation*}
\begin{bmatrix}
      \bm{\Lambda} & \dfrac{1}{2} (\bm{v}_{ik} + \bm{w}_{ik} +\bm{q}-\bm{a}_k(\bm{x}))\\[1.5em]
      \dfrac{1}{2}(\bm{v}_{ik} + \bm{w}_{ik} +\bm{q}-\bm{a}_k(\bm{x}))^{T} & \begin{aligned}t-\bm{\xi}_i^{T}\bm{v}_{ik} - \bm{\xi}_0^{T}\bm{w}_{ik}- \theta_i \phi_i^{**}(\frac{\bm{v}_{ik}}{\theta_i},-\frac{\bm{v}_{ik}}{\theta_i}) \\ -b_k(\bm{x})-\delta^*(\bm{A}_i^{T}\bm{v}_{ik}|Z_{1i}) - \delta^*(\bm{A}^{T}\bm{w}_{ik}|Z_2) \end{aligned}\\
\end{bmatrix}  \succeq 0 
\end{equation*}
which can be rewritten as
\begin{equation*}
\begin{split}
& \left\{ \begin{aligned}
       &\begin{bmatrix}
      \bm{\Lambda} & \dfrac{1}{2} (\bm{v}_{ik} + \bm{w}_{ik} +\bm{q}-\bm{a}_k(\bm{x}))\\[1.5em]
      \dfrac{1}{2}(\bm{v}_{ik} + \bm{w}_{ik} +\bm{q}-\bm{a}_k(\bm{x}))^{T} & \begin{aligned}t-b_k(\bm{x})-\bm{\xi}_i^{T}\bm{v}_{ik} - \bm{\xi}_0^{T}\bm{w}_{ik} \\ - \theta_i \phi_i^{**}(\frac{\bm{v}_{ik}}{\theta_i},-\frac{\bm{v}_{ik}}{\theta_i})-z^1_{ik} -z^2_{ik} \end{aligned} \\
      \end{bmatrix}  \succeq 0 \\
      & \delta^*(\bm{A}_i^{T}\bm{v}_{ik}|Z_{1i}) \leq z^1_{ik} \\
      & \delta^*(\bm{A}^{T}\bm{w}_{ik}|Z_2) \leq z^2_{ik} \\
    \end{aligned} \right.
\end{split}
\end{equation*}
for simplicity. Then the result follows from (\ref{eq:mgdro1:trac1}).
\end{proof}


\subsubsection{MGDRO model with metric-based ambiguity set}
The DRO model with the ambiguity set $\mathcal{D}_{W}$ can be reformulated as a finite-dimensional convex program under some convexity assumptions, as shown in \cite{dro:wasserstein}. It is similarly operated for the MGDRO model. Combined with Lemma \ref{lemma:gro}, the reformulation of the MGDRO model with the ambiguity set $\mathcal{D}_W$ is derived in Theorem \ref{thm:mgdro_w}.
\begin{theorem}\label{thm:mgdro_w}
Suppose $r >0$ and Assumption \ref{as:plinear} are satisfied. $Y_i$ and $\Xi$ are defined as (\ref{eq:mcsandss}). Then, MGDRO model (\ref{eq:mgdro1}) with ambiguity set (\ref{eq:ambiguity_w}) can be reformulated as 
\begin{small}
\begin{equation}\label{eq:mgdrc_w}
\begin{aligned}
\min \limits_{\bm{x},\lambda,s_j,\bm{v}_{ijk},\bm{w}_{ijk}} \quad &\lambda r + \frac{1}{N} \sum_{j=1}^N s_j \\
s.t. \quad & \bm{\xi}_i^{T}\bm{v}_{ijk} + \delta^*(\bm{A}_i^{T}\bm{v}_{ijk}|Z_{1i}) + \bm{\xi}_0^{T}\bm{w}_{ijk}+ \delta^*(\bm{A}^{T}\bm{w}_{ijk}|Z_2)  \\
& \qquad - \hat{\bm{\xi}}_j^T(\bm{v}_{ijk}+\bm{w}_{ijk}- \bm{a}_k(\bm{x}))+ b_k(\bm{x}) - s_j + \theta_i \phi_i^{**}(\frac{\bm{v}_{ijk}}{\theta_i},-\frac{\bm{v}_{ijk}}{\theta_i})  \leq  0 \\
& ||\bm{v}_{ijk}+\bm{w}_{ijk}- \bm{a}_k(\bm{x})||_* \leq \lambda, \quad  1\leq j \leq N, 1 \leq i \leq m, 1 \leq k \leq K \\
& \lambda \geq 0, \bm{x} \in X
\end{aligned}
\end{equation}
\end{small}
\end{theorem}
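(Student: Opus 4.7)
The plan is to mirror the structure of the proof of Theorem \ref{thm:mgdro1}, but replace the Delage--Ye duality step (Lemma \ref{lemma:dual}) with the strong duality theorem for Wasserstein DRO due to Mohajerin Esfahani and Kuhn, and then appeal to Lemma \ref{lemma:gro} in exactly the same spirit as before to convert the resulting semi-infinite constraints into a finite system.

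First, I would write out the MGDRO model (\ref{eq:mgdro1}) with loss
$\ell(\bm{x},\bm{\xi}) := h(\bm{x},\bm{\xi}) - \min_{1\leq i\leq m}\theta_i \min_{\bm{\xi}'\in Y_i}\phi_i(\bm{\xi},\bm{\xi}')$
and apply the standard Wasserstein strong duality result, which rewrites
$\max_{P\in\mathcal{D}_W} E_P[\ell(\bm{x},\bm{\xi})] = \min_{\lambda\geq 0}\; \lambda r + \tfrac{1}{N}\sum_{j=1}^N \sup_{\bm{\xi}\in\Xi}\{\ell(\bm{x},\bm{\xi}) - \lambda\|\bm{\xi}-\hat{\bm{\xi}}_j\|\}$.
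Introducing epigraphic variables $s_j$ to upper-bound each supremum gives the objective $\lambda r + \tfrac{1}{N}\sum_j s_j$ together with the semi-infinite constraints $\ell(\bm{x},\bm{\xi}) - \lambda\|\bm{\xi}-\hat{\bm{\xi}}_j\| \leq s_j$ for all $\bm{\xi}\in\Xi$ and $j=1,\ldots,N$.

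Next I would disentangle the max/min structure inside $\ell$. Under Assumption \ref{as:plinear}, using the identity $\max_k a_k - \min_i c_i = \max_{k,i}(a_k - c_i)$, each constraint splits into the family, indexed by $(j,k,i)$,
\begin{equation*}
\bm{a}_k(\bm{x})^T\bm{\xi}+b_k(\bm{x}) - s_j - \lambda\|\bm{\xi}-\hat{\bm{\xi}}_j\| \;\leq\; \theta_i \min_{\bm{\xi}'\in Y_i}\phi_i(\bm{\xi},\bm{\xi}')\qquad \forall\,\bm{\xi}\in\Xi.
\end{equation*}
This is precisely the template of Lemma \ref{lemma:gro} with $U_1 = Y_i$, $U_2 = \Xi$, $\phi = \theta_i\phi_i$, and $f_{jki}(\bm{\xi}) = \bm{a}_k(\bm{x})^T\bm{\xi}+b_k(\bm{x}) - s_j - \lambda\|\bm{\xi}-\hat{\bm{\xi}}_j\|$, which is closed concave in $\bm{\xi}$ since $\lambda\geq 0$.

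Applying Lemma \ref{lemma:gro}, the semi-infinite constraint is equivalent to the existence of $\bm{v}_{ijk},\bm{w}_{ijk}\in\mathbb{R}^p$ satisfying
$\delta^*(\bm{v}_{ijk}|Y_i)+\delta^*(\bm{w}_{ijk}|\Xi) - f_{jki,*}(\bm{v}_{ijk}+\bm{w}_{ijk}) + (\theta_i\phi_i)^{**}(\bm{v}_{ijk},-\bm{v}_{ijk}) \leq 0$.
The support functions decompose exactly as in the proof of Theorem \ref{thm:mgdro1}: $\delta^*(\bm{v}_{ijk}|Y_i) = \bm{\xi}_i^T\bm{v}_{ijk} + \delta^*(\bm{A}_i^T\bm{v}_{ijk}|Z_{1i})$, $\delta^*(\bm{w}_{ijk}|\Xi) = \bm{\xi}_0^T\bm{w}_{ijk} + \delta^*(\bm{A}^T\bm{w}_{ijk}|Z_2)$, and $(\theta_i\phi_i)^{**}(\bm{v},-\bm{v}) = \theta_i\phi_i^{**}(\bm{v}/\theta_i,-\bm{v}/\theta_i)$. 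The key and only genuinely new computation is the concave conjugate
\begin{equation*}
f_{jki,*}(\bm{y}) \;=\; \inf_{\bm{\xi}\in\mathbb{R}^p}\bigl\{(\bm{y}-\bm{a}_k(\bm{x}))^T\bm{\xi} + \lambda\|\bm{\xi}-\hat{\bm{\xi}}_j\|\bigr\} + s_j - b_k(\bm{x}),
\end{equation*}
which by a change of variables $\bm{\eta}=\bm{\xi}-\hat{\bm{\xi}}_j$ and the definition of the dual norm equals $(\bm{y}-\bm{a}_k(\bm{x}))^T\hat{\bm{\xi}}_j + s_j - b_k(\bm{x})$ when $\|\bm{y}-\bm{a}_k(\bm{x})\|_*\leq\lambda$ and $-\infty$ otherwise. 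This finiteness condition is exactly the dual norm constraint $\|\bm{v}_{ijk}+\bm{w}_{ijk}-\bm{a}_k(\bm{x})\|_* \leq \lambda$ appearing in (\ref{eq:mgdrc_w}); substituting the finite branch with $\bm{y}=\bm{v}_{ijk}+\bm{w}_{ijk}$ into the master inequality yields the first displayed constraint of (\ref{eq:mgdrc_w}), completing the reformulation.

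The main obstacle I anticipate is the concave conjugate calculation and, more importantly, ensuring the split into the $\|\cdot\|_*\leq\lambda$ constraint plus a linear inequality is handled rigorously: one must argue that replacing $f_{jki,*}$ by its two-case formula is legitimate inside Lemma \ref{lemma:gro}, i.e.\ that infeasibility of the dual norm constraint corresponds to $f_{jki,*} = -\infty$ and hence violates the single inequality, so only the finite branch contributes. Everything else is parallel to the moment-based case and introduces no further difficulty.
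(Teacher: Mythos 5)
Your proposal is correct and follows essentially the same route as the paper: the paper likewise reduces the inner maximization to $\min_{\lambda\ge 0}\lambda r+\frac1N\sum_j\sup_{\bm{\xi}\in\Xi}\{\ell(\bm{x},\bm{\xi})-\lambda\|\bm{\xi}-\hat{\bm{\xi}}_j\|\}$ (deriving the duality explicitly via conditional distributions and citing Shapiro rather than quoting the Mohajerin Esfahani--Kuhn result as a black box), introduces the epigraph variables $s_j$, splits the constraint over $(i,j,k)$, and applies Lemma \ref{lemma:gro} with the identical identifications and the same two-case concave conjugate yielding the dual-norm condition $\|\bm{v}_{ijk}+\bm{w}_{ijk}-\bm{a}_k(\bm{x})\|_*\le\lambda$. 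No substantive differences.
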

\begin{proof}
Recall the MGDRO model (\ref{eq:mgdro1}):
\begin{equation*}
\min \limits_{\bm{x} \in X} \max \limits_{P \in \mathcal{D}_W} E_{P}[h(\bm{x},\bm{\xi})- \min \limits_{1 \leq i \leq m} \theta_i \min \limits_{\bm{\xi}' \in Y_i} \phi_i(\bm{\xi},\bm{\xi}')],
\end{equation*}
and the ambiguity set (\ref{eq:ambiguity_w}):
\begin{equation*}
\mathcal{D}_W = \{P \in \mathcal{M}(\Xi) \vert W(P,\hat{P}_N) \leq r \}.
\end{equation*}
The inner maximization problem can be rewiritten as
\begin{equation*}
\begin{aligned}
\max_{P,\Pi} \quad & E_P[h(\bm{x},\bm{\xi})- \min \limits_{1 \leq i \leq m} \theta_i \min \limits_{\bm{\xi}' \in Y_i} \phi_i(\bm{\xi},\bm{\xi}')] \\
s.t. \quad &\int_{\Xi^2} ||\bm{\xi} - \bm{\eta}|| \Pi(d\bm{\xi},d\bm{\eta})\leq r \\
& \left\{\begin{split} & \Pi \text{ is a joint distribution of }\bm{\xi} \text{ and } \bm{\eta} \\ & \text{with marginals } P \text{ and } \hat{P}_N, \text{ respectively.} \end{split} \right. \\
= \max_{P_j \in \mathcal{M}(\Xi)} \quad & \frac{1}{N} \sum_{j=1}^N \int_{\Xi} \left( h(\bm{x},\bm{\xi})- \min \limits_{1 \leq i \leq m} \theta_i \min \limits_{\bm{\xi}' \in Y_i}  \phi_i(\bm{\xi},\bm{\xi}') \right) P_j(d\bm{\xi}) \\
s.t. \quad & \frac{1}{N} \sum_{j=1}^N \int_{\Xi} ||\bm{\xi}-\hat{\bm{\xi}}_j|| P_j(d\bm{\xi}) \leq r \\
\end{aligned}
\end{equation*}
where $P_j$ is the conditional distributions of $\bm{\xi}$ given $\bm{\eta} = \hat{\bm{\xi}}_j, j =1,\ldots, N$. The equality follows from the formula of total probability. Then the Lagrangian dual of the inner maximization problem is easily obtained.
\begin{equation*}
\begin{split}
& \min \limits_{\lambda \geq 0} \max_{P_j \in \mathcal{M}(\Xi)} \left\{ \lambda r + \frac{1}{N} \sum_{j=1}^N \int_{\Xi} \left( h(\bm{x},\bm{\xi})- \min \limits_{1 \leq i \leq m} \theta_i \min \limits_{\bm{\xi}' \in Y_i}  \phi_i(\bm{\xi},\bm{\xi}')- \lambda ||\bm{\xi}-\hat{\bm{\xi}}_j|| \right)  P_j(d\bm{\xi}) \right\}\\
= & \min \limits_{\lambda \geq 0} \left\{\lambda r + \frac{1}{N} \sum_{j=1}^N \max_{P_j \in \mathcal{M}(\Xi)} \left\{\int_{\Xi} \left( h(\bm{x},\bm{\xi})- \min \limits_{1 \leq i \leq m} \theta_i \min \limits_{\bm{\xi}' \in Y_i} \phi_i(\bm{\xi},\bm{\xi}')- \lambda ||\bm{\xi}-\hat{\bm{\xi}}_j|| \right) P_j(d\bm{\xi}) \right\} \right\} \\
= & \min \limits_{\lambda \geq 0} \left\{\lambda r + \frac{1}{N} \sum_{j=1}^N \max_{\bm{\xi} \in \Xi} \left\{h(\bm{x},\bm{\xi})- \min \limits_{1 \leq i \leq m} \theta_i \min \limits_{\bm{\xi}' \in Y_i}  \phi_i(\bm{\xi},\bm{\xi}')- \lambda ||\bm{\xi}-\hat{\bm{\xi}}_j||) \right\} \right\} \\
= & \min \limits_{\lambda \geq 0} \quad \lambda r + \frac{1}{N} \sum_{j=1}^N s_j \\
& s.t. \quad h(\bm{x},\bm{\xi})- \min \limits_{1 \leq i \leq m} \theta_i \min \limits_{\bm{\xi}' \in Y_i} \phi_i(\bm{\xi},\bm{\xi}')- \lambda ||\bm{\xi}-\hat{\bm{\xi}}_j|| \leq s_j \quad \forall \bm{\xi} \in \Xi \quad \forall j \\
\end{split}
\end{equation*}
The strong duality is guaranteed by \cite[Proposition 3.4]{Shapiro}, the second equality follows from that all Dirac distributions $\delta_{\bm{\xi}},\bm{\xi} \in \Xi$ are contained in $\mathcal{M}(\Xi)$, and the last equality is a natural reformulation with introducing auxiliary variables $s_j$. Under the Assumption \ref{as:plinear}, the constraint
\begin{equation*}
h(\bm{x},\bm{\xi})- \min \limits_{1 \leq i \leq m} \theta_i \min \limits_{\bm{\xi}' \in Y_i} \phi_i(\bm{\xi},\bm{\xi}')- \lambda ||\bm{\xi}-\hat{\bm{\xi}}_j|| \leq s_j \quad \forall \bm{\xi} \in \Xi,
\end{equation*}
is rewritten as
\begin{equation}\label{eq:semicons2}
\bm{a}_k(\bm{x})^{T} \bm{\xi}+b_k(\bm{x})- \lambda ||\bm{\xi}-\hat{\bm{\xi}}_j|| - s_j  \leq  \theta_i \min \limits_{\bm{\xi}' \in Y_i} \phi_i(\bm{\xi},\bm{\xi} ') 
 \quad \forall \bm{\xi} \in \Xi \quad \forall i \quad \forall k .
\end{equation}
Given that 
$U_1 = Y_i = \{ \bm{\xi}_i + \bm{A}_i \bm{\zeta} | \bm{\zeta} \in Z_{1i} \}, U_2 = \Xi = \{ \bm{\xi}_0 + \bm{A} \bm{\zeta} | \bm{\zeta} \in Z_2 \}, \phi = \theta_i \phi_i$, 
$f_k(\bm{\xi},(\bm{x},s_j))=\bm{a}_k(\bm{x})^{T} \bm{\xi} + b_k(\bm{x}) - \lambda ||\bm{\xi}-\hat{\bm{\xi}}_j|| - s_j$
in Lemma \ref{lemma:gro}. It is obvious that  $f_k(\cdot,(\bm{x},s_j))$ is a closed concave function. According to Lemma \ref{lemma:gro}, a single semi-infinite constraint in (\ref{eq:semicons2}) is equivalent to the existence of $\bm{v}_{ijk}, \bm{w}_{ijk} \in \mathbb{R}^p$ satisfying
\begin{equation}\label{eq:newcons2}
\delta^*(\bm{v}_{ijk}|Y_{i}) + \delta^*(\bm{w}_{ijk}|\Xi) +(\theta_i \phi_i)^{**}(\bm{v}_{ijk},-\bm{v}_{ijk}) -f_{k*}(\bm{v}_{ijk}+\bm{w}_{ijk},(\bm{x},s_j))\leq 0, 
\end{equation}
where
\begin{equation*}
\begin{split}
\delta^*(\bm{v}_{ijk}|Y_{i})& = \bm{\xi}_i^{T}\bm{v}_{ijk} + \delta^*(\bm{A}_i^{T}\bm{v}_{ijk}|Z_{1i}) \\
\delta^*(\bm{w}_{ijk}|\Xi)& = \bm{\xi}_0^{T}\bm{w}_{ijk}+ \delta^*(\bm{A}^{T}\bm{w}_{ijk}|Z_2) \\
(\theta_i \phi_i)^{**}(\bm{v}_{ijk},-\bm{v}_{ijk})& = \theta_i \phi_i^{**}(\frac{\bm{v}_{ijk}}{\theta_i},-\frac{\bm{v}_{ijk}}{\theta_i}) \\
f_{ijk*}(\bm{y},\bm{x},s_j) & = \inf \limits_{\bm{\xi}\in \mathbb{R}^p} \left\{\bm{y}^T\bm{\xi} - \bm{a}_k(\bm{x})^{T} \bm{\xi}- b_k(\bm{x})+ \lambda ||\bm{\xi}-\hat{\bm{\xi}}_j|| + s_j \right\} \\
& = \inf \limits_{\bm{\xi}\in \mathbb{R}^p} \left\{\lambda ||\bm{\xi}|| + (\bm{y}- \bm{a}_k(\bm{x}))^T (\bm{\xi}+ \hat{\bm{\xi}}_j) - b_k(\bm{x}) + s_j \right\} \\
& = \inf \limits_{\bm{\xi}\in \mathbb{R}^p} \left\{\lambda ||\bm{\xi}|| + (\bm{y}- \bm{a}_k(\bm{x}))^T\bm{\xi}\right\} + (\bm{y}- \bm{a}_k(\bm{x}))^T \hat{\bm{\xi}}_j - b_k(\bm{x}) + s_j  \\
& = \left\{ \begin{split} &(\bm{y}- \bm{a}_k(\bm{x}))^T \hat{\bm{\xi}}_j - b_k(\bm{x}) + s_j  \quad  \text{if} \quad ||\bm{y}- \bm{a}_k(\bm{x})||_* \leq \lambda \\  &-\infty \quad \text{otherwise} \end{split} \right.
\end{split}
\end{equation*}
Hence, if we substitute these expressions in the inequality (\ref{eq:newcons2}), it is equivalent to the following form:
\begin{equation*}
\left\{
\begin{split}
& \bm{\xi}_i^{T}\bm{v}_{ijk} + \delta^*(\bm{A}_i^{T}\bm{v}_{ijk}|Z_{1i}) + \bm{\xi}_0^{T}\bm{w}_{ijk}+ \delta^*(\bm{A}^{T}\bm{w}_{ijk}|Z_2) \\
& \qquad  - \hat{\bm{\xi}}_j^T(\bm{v}_{ijk}+\bm{w}_{ijk}- \bm{a}_k(\bm{x})) + b_k(\bm{x}) - s_j  + \theta_i \phi_i^{**}(\frac{\bm{v}_{ijk}}{\theta_i},-\frac{\bm{v}_{ijk}}{\theta_i}) \leq  0 \\
& ||\bm{v}_{ijk}+\bm{w}_{ijk}- \bm{a}_k(\bm{x})||_* \leq \lambda
\end{split}
\right.
\end{equation*}
Then the inner maximization problem is equivalently reformulated as 
\begin{small}
\begin{equation}\label{eq:inner_re}
\begin{aligned}
\min \limits_{\lambda,s_j,\bm{v}_{ijk},\bm{w}_{ijk}} \quad &\lambda r + \frac{1}{N} \sum_{j=1}^N s_j \\
s.t. \quad & \bm{\xi}_i^{T}\bm{v}_{ijk} + \delta^*(\bm{A}_i^{T}\bm{v}_{ijk}|Z_{1i}) + \bm{\xi}_0^{T}\bm{w}_{ijk}+ \delta^*(\bm{A}^{T}\bm{w}_{ijk}|Z_2)  \\
& \qquad - \hat{\bm{\xi}}_j^T(\bm{v}_{ijk}+\bm{w}_{ijk}- \bm{a}_k(\bm{x}))+ b_k(\bm{x}) - s_j + \theta_i \phi_i^{**}(\frac{\bm{v}_{ijk}}{\theta_i},-\frac{\bm{v}_{ijk}}{\theta_i})  \leq  0 \\
& ||\bm{v}_{ijk}+\bm{w}_{ijk}- \bm{a}_k(\bm{x})||_* \leq \lambda, \quad  1\leq j \leq N, 1 \leq i \leq m, 1 \leq k \leq K \\
& \lambda \geq 0.
\end{aligned}
\end{equation}
\end{small}
The result follows from combining (\ref{eq:inner_re}) with the outer minimization.
\end{proof}

Notice that the computations involving $Z_{1i}$, $Z_2$ and $\phi_i$ are all separated in the two reformulations (\ref{eq:mgdrc1}) and (\ref{eq:mgdrc_w}). Therefore, the MGDRO models are SDP representable with the choices of $Z_{1i}$, $Z_2$ and $\phi_i$ introduced in \cite{gdro}, as shown in Table \ref{table:setanddf}, when $q_1 = 1,2,\infty$. In addition, the norm used in Wasserstein distance should be taken as $1-$, $2-$ or $\infty$-norm.

\begin{table}[thb]
\renewcommand\arraystretch{1.5}
\centering
\caption{Choices of sets and distance functions}\label{table:setanddf}
\begin{threeparttable}
\begin{tabular}{ll}
\toprule[1.5pt]
Set & Relative conjugate function \\
\noalign{\smallskip}\hline\noalign{\smallskip}
$Z_i = \{\bm{\zeta} | \bm{C}_i \bm{\zeta} \leq \bm{d}_i\}$ 
& $\delta^*(\bm{u}|Z_i) = \min \limits_{\bm{y} \geq 0} \{\bm{d}_i^T \bm{y} | \bm{C}_i^T \bm{y} = \bm{u}\} $ \\
$Z_i = \{\bm{\zeta} | ||\bm{\zeta}||_{q_1} \leq \sqrt{\bar{\gamma}_i}\}$ 
& $\delta^*(\bm{u}|Z_i) = \sqrt{\bar{\gamma}_i} ||\bm{u}||_{q_2}$ \\
$Z_2 = \mathbb{R}^L$ 
& $\delta^*(\bm{u}|Z_2) = \left\{ \begin{aligned} & 0 \quad \text{if } \bm{u}=0, \\ & \infty \quad \text{otherwise}. \end{aligned}\right.$ \\
\midrule[1pt]
Distance function & Relative conjugate function \\
\midrule[1pt]
$\phi(\bm{\xi},\bm{\xi}') = ||\bm{\xi}-\bm{\xi}'||_{q_1}$ 
& $\theta \phi^{**}(\frac{\bm{v}}{\theta},-\frac{\bm{v}}{\theta}) = \left\{
      \begin{aligned}
      & 0 \quad \text{if} \quad ||\bm{v}||_{q_2} \leq \theta , \\
      & \infty \quad \text{otherwise},
      \end{aligned}
      \right.$  \\
$\phi(\bm{\xi},\bm{\xi}') = ||\bm{\xi}-\bm{\xi}'||_{q_1} ^2$
& $\theta \phi^{**}(\frac{\bm{v}}{\theta},-\frac{\bm{v}}{\theta}) = \frac{1}{4\theta} ||\bm{v}||_{q_2}^2$ \\
\bottomrule[1.5pt]
\end{tabular}
\begin{tablenotes}
\item[1] \footnotesize $1/q_1 + 1/q_2 = 1$.
\item[2] When $q_2 = 1,2,\infty$, the relative conjugate function is SDP representable.
\end{tablenotes}
\end{threeparttable}
\end{table}

\section{Application in multi-product newsvendor problem}\label{sec:app}
\subsection{Multi-product newsvendor problem}
In this section, the MGDRO models are applied to a multi-product newsvendor problem with multimodal demands. The multi-probuct newsvendor problem is an important basic model in the inventory management. The seller needs to determine the order quantities with fixed prices and uncertain demands. Due to the uncertainty of demands, robust optimization and distributionally robust optimization are powerful tools for multi-product newsvendor problem, see for example \cite{mpnp:ro,mpnp:dro,np:dro:wass}. Especially, Hanasusanto et al. \cite{dro:multimodal} focused on multi-product newsvendor problem with multimodal demand distributions. Hence, we adopted the same problem studied by Hanasusanto et al. \cite{dro:multimodal} and some of our experimental settings also followed those of them.

Let $\bm{x} \in \mathbb{R}^n$ be the vector of order quantities, and $\bm{\xi} \in \mathbb{R}^n$ be the random vector of demands. Accordingly, $\min(\bm{x},\bm{\xi})$ is the vector of sales quantities, where ``$\min$" denotes component-wise minimization. The seller orders the products at wholesale prices and sells them at retail prices. When $x_i<\xi_i$, unsatisfied demand incurs a stock-out cost. On the other hand, unsold stock $x_i-\xi_i$ is cleared at the salvage price when $x_i>\xi_i$. Let $\bm{c}$, $\bm{v}$, $\bm{b}$ and $\bm{g}$ denote the vectors of the wholesale prices, retail prices, stock-out costs and salvage prices, respectively. Spontaneously, assume that $\bm{c}<\bm{v}$ and $\bm{g} < \bm{v}$. Hence, the loss function can be presented as
\begin{equation}\label{eq:loss}
\begin{aligned}
L(\bm{x},\bm{\xi}) & = \bm{c}^T\bm{x} - \bm{v}^T\min(\bm{x},\bm{\xi}) - \bm{g}^T(\bm{x}-\min(\bm{x},\bm{\xi})) + \bm{b}^T(\bm{\xi}-\min(\bm{x},\bm{\xi})) \\
& = \bm{d}^T\bm{x} + \bm{b}^T\bm{\xi} + \bm{h}^T\max(\bm{x}-\bm{\xi},0),
\end{aligned}
\end{equation}
where $\bm{d}=\bm{c}-\bm{v}-\bm{b}$ and $\bm{h}=\bm{v}+\bm{b}-\bm{g}$.

We assume that the decision maker is risk-averse, and use Conditional Value-at-Risk (CVaR) to measure the risk of the loss.
\begin{equation}\label{eq:cvar}
\text{CVaR}_{\varepsilon}(L(\bm{x},\bm{\xi})) = \min \limits_{\beta \in \mathbb{R}} \left( \beta + \frac{1}{\varepsilon} E_P[( L(\bm{x},\bm{\xi})-\beta)^+]\right).
\end{equation}
Hence, the MGDRO model is rewritten as 
\begin{equation}\label{eq:mnmgdro}
\min \limits_{\bm{x} \in \mathbb{R}^n_+,\beta \in \mathbb{R}} 
\sup \limits_{P \in \mathcal{D}} \left( \beta + \frac{1}{\varepsilon}  E_P[(L(\bm{x},\bm{\xi}) - \beta)^+ -\min \limits_{1 \leq i \leq m} \theta_i \min \limits_{\bm{\xi}' \in Y_i} \phi_i(\bm{\xi},\bm{\xi}')]\right),
\end{equation}
where the ambiguity set $\mathcal{D}$ can be taken as (\ref{eq:ambiguity1}) or (\ref{eq:ambiguity_w}), and the loss function $L$ is taken as (\ref{eq:loss}).

Notice that $\bm{h}>0$ in (\ref{eq:loss}). Then, 
\begin{equation}\label{eq:exobj}
\bm{h}^T \max (\bm{x}-\bm{\xi},0) = \sum \limits_{i=1}^n h_i \max(x_i-\xi_i,0) = \max \limits_{1\leq k \leq 2^n} \bm{h}_k^T (\bm{x}-\bm{\xi}),
\end{equation}
where $\bm{h}_k = \bm{I}_k \bm{h}$, $\bm{I}_k \in \mathbb{R}^{n \times n}, \bm{I}_k^{(ii)}=1$ if and only if $i \in \mathcal{I}_k$, $\bm{I}_k^{(ij)}=0$ for any other $i,j$, and $\{\mathcal{I}_k,1\leq k \leq 2^n\}$ are all subsets of $\{1,2,\dots,n\}$. Hence, the objective function in (\ref{eq:mnmgdro}) satisfies Assumption \ref{as:plinear}.

\subsection{Numerical experiments}
In this subsection, some experiment settings follow Hanasusanto et al. \cite{dro:multimodal}. Let $n=3$, the wholesale price, retail price, salvage price and stockout cost of each product be 5, 10, 1 and 2.5, respectively. The risk preference of the decision maker is presented as the parameter in the CVaR measure (\ref{eq:cvar}) that $\varepsilon=0.05$.
The true probability distribution is an equiprobable mixture of $m$ truncated normal distributions $N(\bm{\mu}_i,\bm{\Sigma}_i,\rho_i)$, where $N(\bm{\mu}_i,\bm{\Sigma}_i,\rho_i)$ is obtained by truncating the normal distribution $N(\bm{\mu}_i,F_n(\rho_i^2)/F_{n+2}(\rho_i^2) \bm{\Sigma}_i)$ outside of the ellipsoid $\{\bm{\xi}| (\bm{\xi}-\bm{\mu}_i)^T \bm{\Sigma}_i^{-1} (\bm{\xi}-\bm{\mu}_i) \leq \rho_i^2 F_n(\rho_i^2)/F_{n+2}(\rho_i^2) \}$, and $F_k$ is the cumulative distribution function of $\chi^2$ distribution with the degree of freedom $k$. 

We consider bimodal and trimodal cases, i.e. $m=2,3$. The parameters of the normal distributions are shown in Table \ref{table:bim} and \ref{table:trim} respectively. Notice that the means in trimodal case are twice as much as those in bimodal case. It aims to exhibit more apparent multimodal characteristics of the data by making the 3$\sigma$ intervals of probability distribution for fixed product in each state separate from each other. In addtion, the demands of any two products are correlated with correlation coefficients of $50\%$ in each state, and $\rho_i = (F_n^{-1}(99\%))^{1/2},i=1,\dots,m$. The scatter plots of sample points with such bimodal and trimodal distributions are shown in Figure \ref{fig:multimodal}, where the multimodality of data is visual .

\begin{table}[th]
\renewcommand\arraystretch{1.5}
\caption{Marginal moments of multimodal distribution}\label{table:mmoments}
\centering
\begin{subtable}{.45\linewidth}
\centering
\caption{Bimodal case}\label{table:bim}
\resizebox{!}{1.75cm}{
\begin{tabular}{ccccc}
\toprule[1.5pt]\noalign{\smallskip}
State & Parameters & Product1 & Product2 & Product3 \\
\noalign{\smallskip}\midrule[1pt]\noalign{\smallskip}
\multirow{2}*{1} & Mean & 15 & 30 & 45  \\
~ & Variance & 25 & 25 & 25 \\
\noalign{\smallskip}\midrule[1pt]\noalign{\smallskip}
\multirow{2}*{2} & Mean & 45 & 30 & 15  \\
~ & Variance & 25 & 25 & 25 \\
\noalign{\smallskip}\bottomrule[1.5pt]
\end{tabular}
}
\end{subtable}
\begin{subtable}{.45\linewidth}
\centering
\caption{Trimodal case}\label{table:trim}
\resizebox{!}{2.5cm}{
\begin{tabular}{ccccc}
\toprule[1.5pt]\noalign{\smallskip}
State & Parameters & Product1 & Product2 & Product3 \\
\noalign{\smallskip}\midrule[1pt]\noalign{\smallskip}
\multirow{2}*{1} & Mean & 30 & 60 & 90  \\
~ & Variance & 25 & 25 & 25 \\
\noalign{\smallskip}\midrule[1pt]\noalign{\smallskip}
\multirow{2}*{2} & Mean & 60 & 90 & 30  \\
~ & Variance & 25 & 25 & 25 \\
\noalign{\smallskip}\midrule[1pt]\noalign{\smallskip}
\multirow{2}*{3} & Mean & 90 & 30 & 60  \\
~ & Variance & 25 & 25 & 25 \\
\noalign{\smallskip}\bottomrule[1.5pt]
\end{tabular}
}
\end{subtable} 
\end{table}  

\begin{figure}[tbhp]
\centering
\begin{subfigure}{.45\textwidth}
\centering  
\includegraphics[scale=0.45]{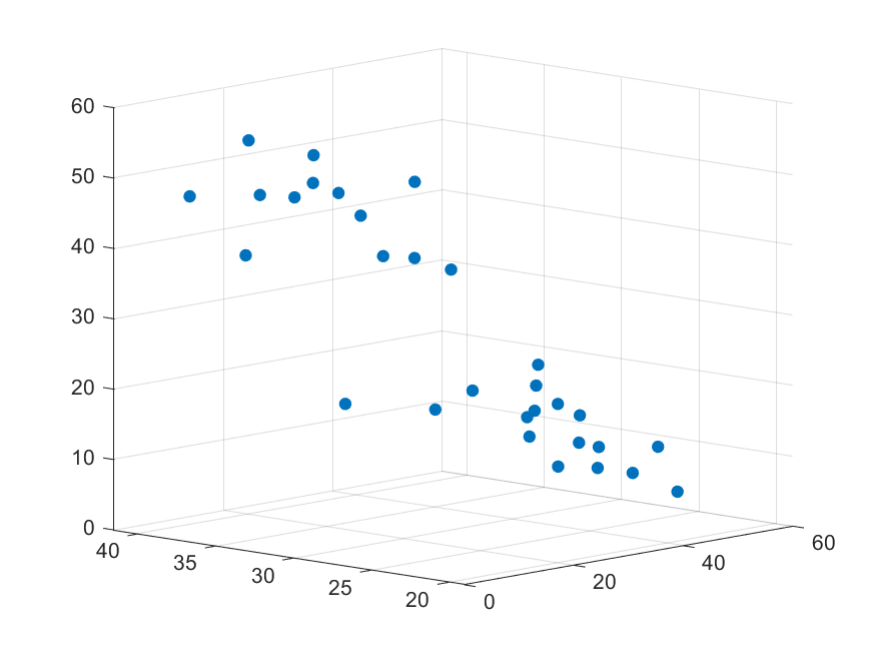}
\caption{Bimodal}\label{fig:bimodal}
\end{subfigure}
\begin{subfigure}{.45\textwidth}
\centering  
\includegraphics[scale=0.45]{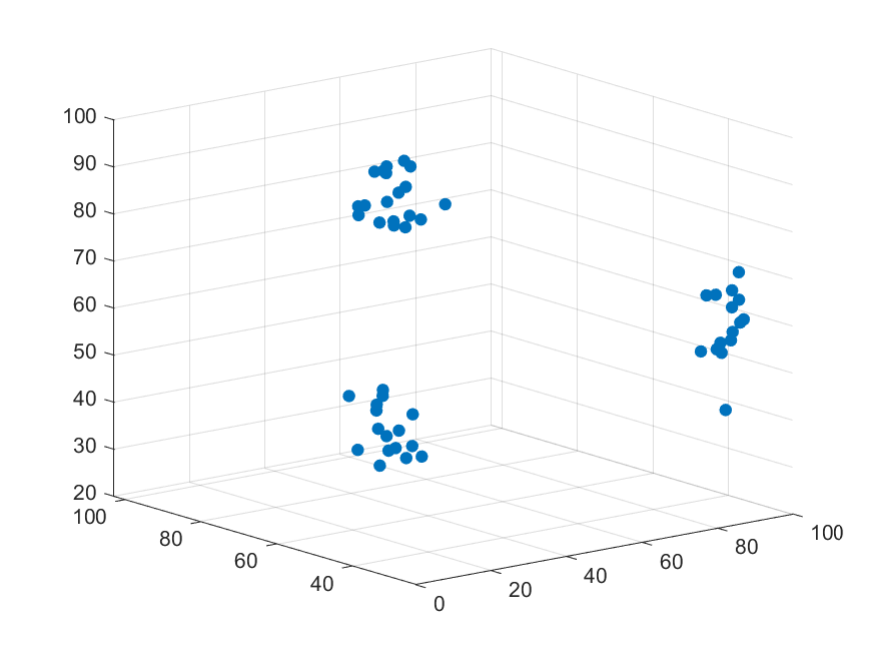}
\caption{Trimodal}\label{fig:trimodal}
\end{subfigure}
\caption{Scatter plot of sample points with multimodal distribution}\label{fig:multimodal}
\end{figure}

\begin{table}[tbhp]
\renewcommand\arraystretch{1.5}
\centering
\caption{Models in numerical experiments}\label{table:models}
\resizebox{\textwidth}{40mm}{
\begin{threeparttable}
\begin{tabular}{cccc}
\hline\noalign{\smallskip}
notation & model & sample space $\Xi$ & core set(s) $Y$($Y_i$)  \\
\noalign{\smallskip}\hline\noalign{\smallskip}
SP & $E_{\hat{P}_N}[h(\bm{x},\bm{\xi})]$ & $\{\hat{\bm{\xi}}_i,i=1,\ldots,N\}$  & - \\ 
DRO-M1 & $\min \limits_{\bm{x} \in X} \max \limits_{P \in \mathcal{D}_M} E_{P}[h(\bm{x},\bm{\xi})]$ & $\mathbb{R}^p$ & -  \\
DRO-M2($\gamma$) & $\min \limits_{\bm{x} \in X} \max \limits_{P \in \mathcal{D}_M} E_{P}[h(\bm{x},\bm{\xi})]$ & $\mathcal{E}(\bm{\mu}_0,\bm{\Sigma}_0,\gamma)$ & -  \\
DRO-W1 & $\min \limits_{\bm{x} \in X} \max \limits_{P \in \mathcal{D}_W} E_{P}[h(\bm{x},\bm{\xi})]$ & $\mathbb{R}^p$ & -  \\
DRO-W2($\gamma$) & $\min \limits_{\bm{x} \in X} \max \limits_{P \in \mathcal{D}_W} E_{P}[h(\bm{x},\bm{\xi})]$ & $\mathcal{E}(\bm{\mu}_0,\bm{\Sigma}_0,\gamma)$ & -  \\
MGDRO-M1 & $\min \limits_{\bm{x} \in X} \max \limits_{P \in \mathcal{D}_M} E_{P}[h(\bm{x},\bm{\xi})- \min \limits_{1 \leq i \leq m} \theta_i \min \limits_{\bm{\xi}' \in Y_i} \phi_i(\bm{\xi},\bm{\xi}')]$ & $\mathbb{R}^p$ & $\mathcal{E}(\bm{\mu}_0,\bm{\Sigma}_0,\bar{\gamma}_{1i})$  \\
MGDRO-M2 & $\min \limits_{\bm{x} \in X} \max \limits_{P \in \mathcal{D}_M} E_{P}[h(\bm{x},\bm{\xi})- \min \limits_{1 \leq i \leq m} \theta_i \min \limits_{\bm{\xi}' \in Y_i} \phi_i(\bm{\xi},\bm{\xi}')]$ & $\mathcal{E}(\bm{\mu}_0,\bm{\Sigma}_0,\bar{\gamma}_2)$ & $\mathcal{E}(\bm{\mu}_0,\bm{\Sigma}_0,\bar{\gamma}_{1i})$  \\
MGDRO-W1 & $\min \limits_{\bm{x} \in X} \max \limits_{P \in \mathcal{D}_W} E_{P}[h(\bm{x},\bm{\xi})- \min \limits_{1 \leq i \leq m} \theta_i \min \limits_{\bm{\xi}' \in Y_i} \phi_i(\bm{\xi},\bm{\xi}')]$ & $\mathbb{R}^p$ & $\mathcal{E}(\hat{\bm{\mu}}_i,\hat{\bm{\Sigma}}_i,\bar{\gamma}_{1i})$ \\
MGDRO-W2 & $\min \limits_{\bm{x} \in X} \max \limits_{P \in \mathcal{D}_W} E_{P}[h(\bm{x},\bm{\xi})- \min \limits_{1 \leq i \leq m} \theta_i \min \limits_{\bm{\xi}' \in Y_i} \phi_i(\bm{\xi},\bm{\xi}')]$ & $\mathcal{E}(\bm{\mu}_0,\bm{\Sigma}_0,\bar{\gamma}_2)$ & $\mathcal{E}(\hat{\bm{\mu}}_i,\hat{\bm{\Sigma}}_i,\bar{\gamma}_{1i})$ \\
\noalign{\smallskip}\hline
\end{tabular}
\begin{tablenotes}
\item [1] $\mathcal{E}(\bm{\mu},\bm{\Sigma},\gamma) = \{\bm{\xi} \in \mathbb{R}^p |(\bm{\xi}-\bm{\mu})^{T}\bm{\Sigma}^{-1}(\bm{\xi}-\bm{\mu}) \leq \gamma \}$.
\item [2] Especially, GDRO-M1 and GDRO-M2 represent the corresponding GDRO models which only use one core set.
\end{tablenotes}
\end{threeparttable}}
\end{table}

All models involving the experiments are listed in Table \ref{table:models}, where the SP model is the stochastic programming model\cite{sp} using the emprical distribution approximate the true probability distribution. As shown in Table \ref{table:models}, we consider the bounded and unbounded sample spaces of DRO and MGDRO models for comparison, respectively. For convenience, ellipsoidal core sets and sample spaces are adopted, where $\mathcal{E}(\bm{\mu},\bm{\Sigma},\gamma) = \{\bm{\xi} \in \mathbb{R}^p |(\bm{\xi}-\bm{\mu})^{T}\bm{\Sigma}^{-1}(\bm{\xi}-\bm{\mu}) \leq \gamma \}$ denotes the ellipsoid with parameters $\bm{\mu},\bm{\Sigma},\gamma$. Besides, $\bm{\mu}_0$ and $\bm{\Sigma}_0$ are sample mean and sample covariance matrix, $\hat{\bm{\mu}}_i$ is the center found by the clustering algorithm, and $\hat{\bm{\Sigma}}_i$ is the covariance matrix of all samples belonging to class $i$. Accordingly, $\bar{\gamma}_1$ ($\bar{\gamma}_2$) is the smallest $\gamma$ such that $50\%$ ($100\%$) samples are included in $\mathcal{E}(\bm{\mu}_0,\bm{\Sigma}_0,\gamma)$, while $\bar{\gamma}_{1i}$ is the smallest $\gamma$ such that $50\%$ samples of class $i$ are included in $\mathcal{E}(\hat{\bm{\mu}}_i,\hat{\bm{\Sigma}}_i,\gamma)$. Besides, the distance functions $\phi_i$ in MGDRO models are all taken as $||\bm{\xi}-\bm{\xi}'||_2$. 

The $\theta$s in MGDRO models and the radii of the Wasserstein balls are determined via a 5-fold cross validation. Partition $\{\hat{\bm{\xi}}_1,\ldots,\hat{\bm{\xi}}_N\}$ into five subsets $S_i,i=1,\ldots,5$ with approximately equal cardinality $N/5$. Each time, one subset $S_i$ is chosen as the validation dataset and the remaining four subsets are merged as a training dataset. Using only samples in the training dataset, the MGDRO model is solved with an alternative parameter value to obtain $x^{*(-i)}$. The validation dataset is used to estimate the out-of-sample performance of $x^{*(-i)}$ via sample average approximation. After all subsets successively play the role of the validation dataset, the average of five out-of-sample performances is used to measure the performance of the certain parameter value. The best parameter value among all alternative values is chosen in the numerical experiments.

The out-of-sample CVaR value is adopted to measure the performance of all models. We repeated each group of experiments 100 times and computed the average and variance of the out-of-sample CVaR values for each configuration.

\subsubsection{Comparison of different models with moment-based ambiguity set}
In this subsection, we focus on the moment-based ambiguity set. The MGDRO models are compared with SP, DRO and GDRO models. 
The numerical results are shown in Table \ref{table:multimodal}.

\begin{table}[th]
\renewcommand\arraystretch{1.5}
\caption{Out-of-sample CVaR values of moment-based framework}\label{table:multimodal}
\centering
\begin{subtable}{.45\linewidth}
\centering
\caption{Bimodal case}\label{table:bimodal}
\resizebox{!}{3.5cm}{
\begin{tabular}{ccc}
\toprule[1.5pt]
Model & Mean & Variance\\
\midrule[1pt]
SP & -211.7288 &  97.6492  \\
DRO-M1 & 15.6511 & 869.8281 \\
DRO-M2($\bar{\gamma}_2$) & -180.1215 & 290.6783   \\
DRO-M2($\bar{\gamma}_1$) & -208.6063  & 73.0621  \\
\midrule[1pt]	
GDRO-M1 & -208.7806 & 72.1910   \\
GDRO-M2 & -208.2579 & 83.8964   \\
MGDRO-M1 & -215.7776 & 59.2022   \\
MGDRO-M2 & -213.3395 & 61.7546 \\
\bottomrule[1.5pt]
\end{tabular}
}
\end{subtable}
\begin{subtable}{.45\linewidth}
\centering
\caption{Trimodal case}\label{table:trimodal}
\resizebox{!}{3.5cm}{
\begin{tabular}{ccc}
\toprule[1.5pt]
Model & Mean & Variance\\
\midrule[1pt]
SP & -521.8297 & 153.8776   \\
DRO-M1 & 90.7669 & 2041.1434 \\
DRO-M2($\bar{\gamma}_2$) & -419.2496 & 1547.9562   \\
DRO-M2($\bar{\gamma}_1$) & -509.5733  & 329.6365  \\
\midrule[1pt]	
GDRO-M1 & -509.7006 & 460.2741   \\
GDRO-M2 & -511.2113 & 383.4096   \\
MGDRO-M1 & -526.1398 & 79.9519    \\
MGDRO-M2 & -526.9204 & 87.1452  \\
\bottomrule[1.5pt]
\end{tabular}
}
\end{subtable} 
\end{table}  

MGDRO models outperform GDRO and DRO models significantly, whether from the perspective of the mean or variance. It demonstrates that the MGDRO models are able to make full use of the information of the multimodal structure and capture the structural characteristics accurately. Therefore, the MGDRO models reduce the degree of conservatism greatly and maintain high robustness regarding the set of samples simultaneously. Compared with the DRO models with the same sample space, the GDRO models with only one core set still shows improvement, especially the improvement regarding conservatism, implicated by smaller mean CVaR values. Therefore, the construction of the core sets, even only one core set, makes great sense of reducing the degree of conservatism. 

The SP model shows better performance than DRO and GDRO models, due to utilizing the sample data directly and retaining the structural information. It indicates that the structural information plays a decisive role in modelling with data from multimodal distribution, which is much more important than the moment information. The MGDRO models provide smaller means and variances of the out-of-sample CVaR values than those of SP model, indicating that the ambiguity set with moment information plays an important role as well. Hence, the MGDRO models combining the structural and moment information performs best in the sense of out-of-sample CVaR value.

\subsubsection{Comparison between moment-based and metric-based ambiguity set}
In this subsection, the contrast of the moment-based and metric-based ambiguity sets are investigated and shown.
The numerical results are shown in Table \ref{table:two_ambiguity_set}.

\begin{table}[th]
\renewcommand\arraystretch{1.5}
\caption{Out-of-sample CVaR values}\label{table:two_ambiguity_set}
\centering
\begin{subtable}{.45\linewidth}
\centering
\caption{Bimodal case}
\resizebox{!}{3.5cm}{
\begin{tabular}{ccc}
\toprule[1.5pt]
Model & Mean & Variance\\
\midrule[1pt]
SP & -208.5797 & 76.8787   \\
DRO-M1 & 15.4570 & 607.7011 \\
DRO-M2($\bar{\gamma}_2$) & -175.3580 & 325.4744   \\
DRO-W1 & -208.5659 & 70.9747 \\
DRO-W2($\bar{\gamma}_2$) & -207.9194 & 77.3860   \\
\midrule[1pt]
MGDRO-M1 & -210.3320 &  61.4321  \\
MGDRO-M2 & -210.4315 & 60.3978 \\
MGDRO-W1 & -208.5117 & 70.9678  \\
MGDRO-W2 & -208.4946 & 69.7494  \\
\bottomrule[1.5pt]
\end{tabular}
}
\end{subtable}
\begin{subtable}{.45\linewidth}
\centering
\caption{Trimodal case}
\resizebox{!}{3.5cm}{
\begin{tabular}{ccc}
\toprule[1.5pt]
Model & Mean & Variance\\
\midrule[1pt]
SP & -523.0786 &  142.5879  \\
DRO-M1 & 90.4633 & 2032.8589 \\
DRO-M2($\bar{\gamma}_2$) & -421.5343 &  1643.3484  \\
DRO-W1 & -523.0786 & 142.5879 \\
DRO-W2($\bar{\gamma}_2$) & -524.7676 & 125.8719  \\
\midrule[1pt]
MGDRO-M1 & -526.9301 & 100.2487   \\
MGDRO-M2 & -527.6518 & 107.5988 \\
MGDRO-W1 & -523.7801 & 145.7764  \\
MGDRO-W2 & -526.6669 & 109.5437  \\
\bottomrule[1.5pt]
\end{tabular}
}
\end{subtable} 
\end{table}  

The model DRO-W outperforms DRO-M, while the model MGDRO-M outperforms MGDRO-W. It is intuitive because the Wasserstein-distance-based ambiguity set leverages all original sample data, while the moment-based ambiguity set only includes the mean and convariance matrix information. Hence, DRO-W is able to capture the multimodal characteristics, thereby performs much better than DRO-M. 
It is also for this reason that the MGDRO framework shows much more improvement for the DRO model with moment-based ambiguity set than that with Wasserstein-distance-based ambiguity set. In addition, the fact that MGDRO-M outperforms MGDRO-W reinforces the effect of the moment information when making full use of the structual properties via core sets.

Note that whether the sample space is bounded or unbounded has little impact on the performance of the DRO and MGDRO models with metric-based ambiguity set. In order to attain better robustness, the whole space is more appropriate as sample space in DRO and MGDRO models with metric-based ambiguity set.

\subsubsection{Comparison with other multimodal ambiguity set}
Hanasusanto et al. \cite{dro:multimodal} formulated a distributionally robust multi-product newsvendor model with a multimodal ambiguity set, denoted as MDRO in the following.
\begin{equation*}
\min \limits_{\bm{x} \in \mathbb{R}^n_+,\beta \in \mathbb{R}} 
\sup \limits_{P \in \mathcal{D}_H} \left( \beta + \frac{1}{\varepsilon}  E_P[(L(\bm{x},\bm{\xi}) - \beta)^+]\right),
\end{equation*}
where
\begin{equation*}
   \mathcal{D}_H=
   \left\{
     \sum \limits_{i=1}^m p_i P_i
   \middle\arrowvert
   \begin{array}{lcl}
    & \sum \limits_{i=1}^m p_i = 1, \sum \limits_{i=1}^m (p_i-\hat{p}_i)/p_i \leq \delta_p, p_i \geq 0, P_i \in \mathcal{P}(\Xi_i) \\
    & \wideubar{\Omega}_i \leq \int_{\Xi_i} [\bm{\xi}^T 1]^T [\bm{\xi}^T 1] P_i(d\bm{\xi}) \leq \bar{\Omega}_i \quad \forall i = 1,\ldots,m
   \end{array}
   \right\},
\end{equation*}
\begin{equation*}
\begin{aligned}
& \wideubar{\Omega}_i = \begin{bmatrix} (1-\tau)^2(\hat{\bm{\Sigma}}_i+\hat{\bm{\mu}}_i \hat{\bm{\mu}}_i^T) & (1-\tau)\hat{\bm{\mu}}_i \\ (1-\tau)\hat{\bm{\mu}}_i^T & 1 \end{bmatrix},\\
& \bar{\Omega}_i = \begin{bmatrix} (1+\tau)^2(\hat{\bm{\Sigma}}_i+\hat{\bm{\mu}}_i \hat{\bm{\mu}}_i^T) & (1+\tau)\hat{\bm{\mu}}_i \\ (1+\tau)\hat{\bm{\mu}}_i^T & 1 \end{bmatrix},\\
\end{aligned}
\end{equation*}
and $\Xi_i$ is the support set under the state $i$.

Hence, we also compared MGDRO framework with the MDRO model. In the numerical experiments, $\hat{p}=\bm{e}_m/m $, $\delta_p = 0$, $\tau = 0.1$, and $\Xi_i$ is chosen as the ellipsoid $\mathcal{E}(\hat{\bm{\mu}}_i,\hat{\bm{\Sigma}}_i,\bar{\gamma}_{2i})$, where $\bar{\gamma}_{2i}$ is the smallest $\gamma$ such that all samples of class $i$ obtained by the clustering algorithm are included in $\mathcal{E}(\hat{\bm{\mu}}_i,\hat{\bm{\Sigma}}_i,\gamma)$. Besides, we also consider the special case when the true support sets and moments are known, i.e. $\hat{\bm{\mu}}_i = \bm{\mu}_i, \hat{\bm{\Sigma}}_i = \bm{\Sigma}_i, \bar{\gamma}_{2i} = \rho_i^2 F_n(\rho_i^2)/F_{n+2}(\rho_i^2)$, therefore it is regardless of the set of samples. The numerical results are shown in Table \ref{table:comp_Hana}, where MDRO-D denotes the data-driven MDRO model, and MDRO-T denotes the MDRO model with true support sets and moments.

\begin{table}[th]
\renewcommand\arraystretch{1.5}
\caption{Out-of-sample CVaR values}\label{table:comp_Hana}
\centering
\begin{subtable}{.45\linewidth}
\centering
\caption{Bimodal case}
\resizebox{!}{3cm}{
\begin{tabular}{ccc}
\toprule[1.5pt]
Model & Mean & Variance\\
\midrule[1pt]
SP & -211.7763 & 66.5670   \\
MDRO-D & -200.0447 & 313.4354 \\
MDRO-T & -219.5837 &  -  \\
MGDRO-M1 & -214.4576 & 33.1766   \\
MGDRO-M2 & -214.5496 & 32.1666 \\
MGDRO-W1 & -211.5750 & 67.5989  \\
MGDRO-W2 & -211.5001 & 66.7781  \\
\bottomrule[1.5pt]
\end{tabular}
}
\end{subtable}
\begin{subtable}{.45\linewidth}
\centering
\caption{Trimodal case}
\resizebox{!}{3cm}{
\begin{tabular}{ccc}
\toprule[1.5pt]
Model & Mean & Variance\\
\midrule[1pt]
SP & -519.1817 &  118.2426  \\
MDRO-D & -338.9905 & 95658.8055 \\
MDRO-T & -526.9637 &  -  \\
MGDRO-M1 & -522.2796 &  57.0337  \\
MGDRO-M2 & -523.4279 & 61.6961 \\
MGDRO-W1 & -520.2344 & 113.1492  \\
MGDRO-W2 & -521.1914 & 105.9388  \\
\bottomrule[1.5pt]
\end{tabular}
}
\end{subtable} 
\end{table}  

The MDRO-D is inferior to MGDRO models, while MDRO-T provides the best out-of-sample CVaR value. The sample mean and covariance matrix are good estimation of the true mean and covariance matrix, while the estimations of the support sets under each state are relatively rough. It demonstrates that the performance of the MDRO model depends a lot on the estimations of the support sets under each state. The MDRO model is a great choice as long as the support sets are known, or estimated very well. When the support sets are unknown, the MGDRO models have more advantages, due to the globalization. The sufficiently large sample space contains the whole support set of the true probability distribution, so the MGDRO model is robust enough even if the core sets are not appropriate estimations of the true support sets under each state. Hence, the MGDRO framework is a great choice to handle the optimization problems with multimodal uncertain data if there is only a set of samples of uncertain data.

\section{Conclusions}\label{sec:cons}
In this study, we introduce a globalized distributionally robust optimization framework, efficiently handling uncertain problems with multimodal data. The sample space is sufficiently large to guarantee the support set of the true probability distribution is contained. The multiple core sets allow us to capture the distribution regions around each mode. The penalty coefficients $\theta_i,i=1,\ldots,m$ reflect the degree of preference for the core sets and control the degree of conservatism flexibly. The penalty item composed of the distance from the random vector to core sets and the penalty coefficients, weakens the impact of the regions outside of the core sets on the expectation of the objective function, thereby highlights the impact of the multimodality. Hence, the MGDRO framework utilizes the information of the sample data more efficiently, thereby reduces the degree of conservatism and preserves the robustness simultaneously. Under some assumptions, the MGDRO models with moment-based and metric-based ambiguity sets are both computionally tractable. We apply the MGDRO models to a multi-product newsvendor problem with multimodal demand distributions. The numerical results show that the MGDRO models outperform other models significantly, indicating the benefits of globalization and multiple core sets.

\clearpage
\bibliographystyle{plain}
\bibliography{ref}

\end{document}